\newcommand{\C}{{\mathbb C}}
\newcommand{\Q}{{\mathbb Q}}
\newcommand{\R}{{\mathbb R}}
\newcommand{\N}{{\mathbb N}}
\newcommand{\Z}{{\mathbb Z}}
\newcommand{\OO}{\mathcal O}
\newcommand{\fp}{\mathfrak p}
\newcommand{\K}{\overline{K}}
\newcommand{\Ft}{\widetilde{F}}
\newcommand{\disc}{\operatorname{disc}}
\newcommand{\Tr}{\operatorname{Tr}}
\newcommand{\Orb}{\operatorname{Orb}}
\newcommand{\Gal}{\operatorname{Gal}}
\newcommand{\eps}{\varepsilon}
\newcommand{\la}{\langle}
\newcommand{\ra}{\rangle}
\newcommand{\lra}{\longrightarrow}
\newcommand{\too}{\longmapsto}
\newcommand{\ts}{\textstyle}
\newtheorem{thm}{Theorem}
\newtheorem{prop}[thm]{Proposition}
\newtheorem{lem}[thm]{Lemma}
\newtheorem{cor}[thm]{Corollary}
\theoremstyle{definition}
\newtheorem{rem}{Remark}
\newtheorem{conjecture}{Conjecture}
\begin{document}
\title{The Euclidean Algorithm in Cubic Number Fields}
\author{Stefania Cavallar, Franz Lemmermeyer}

\abstract 
In this note we present algorithms for computing Euclidean
minima of cubic number fields; in particular, we were able
to find all norm-Euclidean cubic number fields with
discriminants $-999 < d < 10^4$. 
\endabstract

\maketitle

\section{Introduction}
This article deals with the problem of determining whether a
given cubic number field is Euclidean with respect to the
absolute value of the norm. The corresponding problem for
quadratic number fields was solved in 1952, when Barnes
and Swinnerton-Dyer showed (after much work done by
various authors) that the following list of discriminants
of norm-Euclidean quadratic number fields is complete:
$$ d \ = \ -11, -8, -7, -4, -3, 5, 8, 12, 13, 17, 21, 24, 28, 29,
   33, 37, 41, 44, 57, 73, 76. $$
In the cubic case, Davenport proved that the number of
norm-Euclidean complex cubic number fields (i.e. cubic 
fields with unit rank $1$) is finite, whereas
Heilbronn conjectured that there are infinitely
many totally real cubic fields which are norm-Euclidean.
We hope that the methods presented in this paper will 
eventually lead to a complete list of norm-Euclidean
complex cubic fields, and that extended
computations for real cubic fields will show whether
Heilbronn's conjecture is reasonable or not. 

\section{Notation}

In order to present our method we need a few definitions. 
Let $K$ be a number field, and let $\OO_K$ denote its ring of
integers. The Euclidean minimum of $\xi \in K$ is defined to be
$$ M(K,\xi) = \inf \ \{ |N_{K/\Q} (\xi-\eta)|: \eta \in \OO_K \}.$$
The field $K$ is Euclidean with respect to the absolute value
of the norm (norm-Euclidean for short) if $M(K,\xi) < 1$ for
all $\xi \in K$. Let us introduce the {\em Euclidean minimum} $M(K)$
of $K$ by putting $M(K) = \sup \ \{M(K,\xi): \xi \in K\}$. Obviously,
$K$ is norm-Euclidean if $M(K) < 1$, and not norm-Euclidean if 
$M(K) > 1$ or if there is a $\xi \in K$ such 
that $M(K) = M(K,\xi) = 1$.  

Next we introduce the inhomogeneous minimum. To this
end let $K$ be a number field generated by a root $\alpha$
of an irreducible monic polynomial $f \in \Z[x]$. Let $\alpha_1,
\ldots, \alpha_r$ denote the real roots, and
$\alpha_{r+1}, \overline{\alpha_{r+1}}, \ldots, 
 \alpha_s, \overline{\alpha_s}$ the $s$ pairs of complex
conjugate roots of $f$ in $\C$; then
the maps $\alpha \lra \alpha_j$ can be extended to 
yield $r$ embeddings $\phi_1, \ldots, \phi_r: K \lra \R$ 
and $s$ pairs of complex conjugate embeddings
$\phi_{r+1}, \overline{\phi_{r+1}}, \ldots, 
 \phi_s, \overline{\phi_s}: K \lra \C$. 

Choose a $\Q$-basis $\{\beta_1, \ldots, \beta_n\}$ of $K$;
the map
$$ \pi : K \lra \R^n: \sum a_j\beta_j \too (a_1, \ldots, a_n)$$
embeds $K$ into $\R^n$, and we will identify $K$ and $\pi(K)$
for the rest of this article. Clearly $K$ is dense in 
$\R^n$, so we will write $\K = \R^n$ if we want to make it  
clear that we regard $\R^n$ as the closure of $K$. Now
$$N: \R^n \lra \R: (x_1, \ldots, x_n) \too 
	\prod_{j=1}^r \Big|\sum_i x_i \phi_j(\beta_i)\Big| \,\cdot\,
	\prod_{j=r+1}^s \Big|\sum_i x_i \phi_j(\beta_i)\Big|^2$$
is a continuous map which coincides with the absolute value
of the norm  $N_{K/\Q}$ when restricted to $K$. Similarly,
the maps
$$ |\cdot|_j: \R^n \lra \R: x \, = \, (x_1, \ldots, x_n) \too
	|x|_j \, = \,\Big|\sum_i x_i \phi_j(\beta_i)\Big|, 
		    \quad (0 \le j \le r+s), $$
are continuous and their restrictions to $K$ agree with the 
$r+s$ archimedean valuations of $K$. By an abuse of language,
we will refer to the maps $N$ and $|\cdot|_j$ as the 'norm'
and the 'valuations' of $\K$, respectively, although $N$ is
not a norm on $\K$ since $N(x) = 0$ does not imply $x = 0$.
Similarly, the $|\cdot|_j$ are not valuations of $\K$ for the
same reason. All we can say is

\begin{prop}\label{P0}
Let $K$ be a number field, and assume that 
$\lim_{i \to \infty} |\xi_i|_j = 0$ for all $1 \le j \le r+s$
and a sequence of elements $\xi_i \in \K$. Then
$\lim_{i \to \infty} \xi_i = 0$.
\end{prop}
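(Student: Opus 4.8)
The plan is to recognize the $r+s$ valuations $|\cdot|_j$ as the components of a single $\R$-linear isomorphism of $\K = \R^n$ onto $\R^r \times \C^s$, after which the proposition reduces to the continuity of the inverse of that map. First I would bundle the valuations together into
$$ \Phi : \R^n \lra \R^r \times \C^s, \qquad \Phi(x) = \Big( {\ts\sum_i} x_i\phi_1(\beta_i), \ldots, {\ts\sum_i} x_i\phi_{r+s}(\beta_i) \Big), $$
whose first $r$ components are real and whose last $s$ components are complex. Identifying $\R^r \times \C^s$ with $\R^n$ by splitting each complex component into its real and imaginary parts turns $\Phi$ into an $\R$-linear endomorphism of $\R^n$, and by construction $|x|_j$ is exactly the absolute value of the $j$-th component of $\Phi(x)$ for $1 \le j \le r$, respectively its modulus for $r < j \le r+s$.

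The key step, and the only genuine obstacle, is that $\Phi$ is invertible. Its matrix before the real/imaginary splitting is $(\phi_j(\beta_i))$; for each complex place I would replace the pair of conjugate rows $\phi_j(\beta_i)$ and $\overline{\phi_j(\beta_i)}$ by their real and imaginary parts, a row operation whose effect on the determinant is multiplication by a nonzero constant (namely $\tfrac{i}{2}$ per complex place). The resulting $n \times n$ real determinant therefore differs by a nonzero factor from the full embedding determinant $\det\big(\sigma_k(\beta_i)\big)_{1 \le k,i \le n}$ taken over all $n$ complex embeddings $\sigma_k$ of $K$. Since the square of the latter equals $\disc(\beta_1, \ldots, \beta_n)$, which is nonzero because the $\beta_i$ form a $\Q$-basis and the embeddings are pairwise distinct, $\Phi$ is an $\R$-linear isomorphism.

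Granting this, the assertion is immediate. The hypotheses $|\xi_i|_j \to 0$ for $1 \le j \le r$ say that the real components of $\Phi(\xi_i)$ tend to $0$, while $|\xi_i|_j \to 0$ for $r < j \le r+s$ forces the modulus, and hence both the real and imaginary parts, of each complex component to tend to $0$; thus $\Phi(\xi_i) \to 0$ in $\R^n$. As a linear bijection of finite-dimensional real vector spaces, $\Phi^{-1}$ is continuous, so $\xi_i = \Phi^{-1}(\Phi(\xi_i)) \to \Phi^{-1}(0) = 0$, which is what we wanted. The whole difficulty is thus concentrated in the nonvanishing of the discriminant of a field basis; everything else is the boundedness of a linear inverse.
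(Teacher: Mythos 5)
Your proof is correct, but it takes a genuinely different route from the paper. You package the valuations into the Minkowski-type embedding $\Phi : \R^n \to \R^r \times \C^s$, prove $\Phi$ is an $\R$-linear isomorphism by relating its real determinant (after splitting conjugate rows into real and imaginary parts, costing a factor $i/2$ per complex place) to $\det\bigl(\sigma_k(\beta_i)\bigr)$, whose square is the nonzero discriminant $\disc(\beta_1,\ldots,\beta_n)$, and then invoke the automatic continuity of $\Phi^{-1}$ on a finite-dimensional space. The paper instead argues quantitatively: it takes the dual basis $\{\beta_k\}$ of a basis $\{\alpha_k\}$ with respect to the trace form $\langle \xi,\eta\rangle = \Tr_{K/\Q}(\xi\eta)$, recovers each coordinate as $x_k^{(i)} = \Tr_{K/\Q}(\beta_k\xi_i)$, and bounds it by the triangle inequality as $|x_k^{(i)}| \le \delta\bigl(|\beta_k|_1 + \cdots + 2|\beta_k|_{r+s}\bigr)$, so the coordinates tend to $0$ with explicit constants. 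The two arguments rest on the same underlying fact in different guises --- nondegeneracy of the trace form (existence of the dual basis) is equivalent to the nonvanishing of the discriminant that makes your $\Phi$ invertible, both coming from separability of $K/\Q$ --- but your version is the more conceptual one, reducing everything to linear algebra, while the paper's version produces an explicit bound $|x_k^{(i)}| < \delta C$ controlling coordinates in terms of valuations, which is in keeping with the computational spirit of the article. One stylistic note: your justification that $\disc(\beta_1,\ldots,\beta_n)\ne 0$ deserves a half-sentence more care for a general (non-power) basis --- it follows from the power-basis Vandermonde determinant together with the invertible change-of-basis matrix, or directly from nondegeneracy of the trace form --- but this is standard and does not affect correctness.
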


\begin{proof}
$K$ is an $n$-dimensional $\Q$-vector space, with a 
nondegenerate bilinear form given by
$\langle \xi, \eta \rangle = \Tr_{K/\Q}(\xi\eta)$,
where $\Tr_{K/\Q}$ denotes the trace of $K/\Q$. 
Choose  a $\Q$-basis $\{\alpha_1, \ldots, \alpha_n\}$
of $K$, and let $\{\beta_1, \ldots, \beta_n\}$
denote the dual basis with respect to $\langle\cdot,\cdot\rangle$,
i.e. the basis with the property $\Tr_{K/\Q}(\alpha_i\beta_j)
= \delta_{ij}$ (Kronecker's delta). 

Now assume that $\lim_{i \to \infty} |\xi_i|_j = 0$ for all 
$1 \le j \le r+s$, and let $\delta > 0$ be given; then there 
exists an $N \in \N$ such that $|\xi_i|_j < \delta$ for
all $i \ge N$ and $1 \le j \le r+s$. Write $\xi_i$ as
$ \xi_i = x^{(i)}_1\alpha_1 + \ldots + x^{(i)}_n\alpha_n$.
Then $ |x^{(i)}_k| \, = \, |\Tr_{K/\Q}(\beta_k\xi_i)|.$
Since the trace is the sum of all conjugates of $\beta_k\xi_i$,
applying the triangle inequality yields
$$ \begin{array}{rcl}
|x^{(i)}_k| & \le & |\beta_k\xi_i|_1 + \ldots 
	+ |\beta_k\xi_i|_r + 2 |\beta_k\xi_i|_{r+1} + \ldots
	+ 2 |\beta_k\xi_i|_{r+s} \\
	& \le & \delta(|\beta_k|_1 + \ldots + 2|\beta_k|_{r+s}) 
		< \delta C,	\end{array} $$
where $C = |\beta_k|_1 + \ldots + 2|\beta_k|_{r+s}$ does not
depend on $i$ or the choice of $\delta$. Since we can
make $\delta$ as small as we please, we find that
$\lim_{i \to \infty} x^{(i)}_k = 0$ for all $1 \le k \le n$,
and this is equivalent to $\lim_{i \to \infty} \xi_i = 0$.
\end{proof}

Obviously $K$ is norm-Euclidean if and only if for every 
$\xi \in K$ there is an $\alpha \in \OO_K$ such that 
$N(\xi - \alpha) < 1$. Actually, all known examples of 
norm-Euclidean number fields satisfy the stronger condition 
that for every $\xi \in \K$ there exists an $\alpha \in \OO_K$ 
such that $N(\xi - \alpha) < 1$. We put
$$M(\K,\xi) = \inf \ \{N(\xi - \eta): \eta \in \OO_K\},$$
and define the {\em inhomogeneous minimum} of $\K$  as
$ M(\K) = \sup \ \{M(\K,\xi): \xi \in \K\}.$
Clearly $M(K) \le M(\K)$; it is conjectured that $M(K) = M(\K)$
for all number fields, but so far this equality has been proved
only for fields with unit rank $\le 1$.

We say that $M(\K)$ is {\em isolated} if 
$M_2(\K) = \sup \ \{M(\K,\xi): \xi \in \K \setminus 
	\{\zeta: M(\K,\zeta) = M(\K)\}\} < M(\K)$. In this case,
we call $M(\K) = M_1(\K)$ the first and $M_2(\K)$ the second
minimum of $\K$.

\begin{rem}
$\K$ is a ring. In fact, the product of
two elements $\xi = \sum a_i \alpha^i$ and $\eta = \sum b_i \alpha^i$
of $K$ has the form $\sum c_i \alpha^i$, where $c_i$ is a polynomial 
in $\Q[a_1, \ldots, b_n]$. Thus, the product $\xi \eta$ can
be given a meaning for real values of the coefficients
$a_i, b_i$. 
\end{rem}

Now put $n = (K:\Q)$, and choose an integral basis 
$\{\beta_1 = 1, \beta_2, \ldots, \beta_n\}$ and a real 
number $k > 0$ (for example $k = 0.99$). We start by dividing
$$ \overline{F}_+ = \Big\{ \xi = \sum_{i=1}^n a_i\beta_i\, | \, 
     a_1 \in [0,1/2], \, a_2, \dots, a_n  \in (-1/2,1/2] \Big\}$$
into smaller subcubes. Such a subcube $S$ is called {\em $k$-covered}
(or simply {\em covered} if the reference to $k$ is clear) 
if we can find a $\gamma \in \OO_K$ such that
$N(\xi-\gamma) < k$ for all $\xi \in S$; it is called
{\em uncovered} if we cannot find such a $\gamma$ (even if there
exists one). Finally, a point $\xi \in \K$ is called
{\em $k$-exceptional} if $N(\xi-\gamma) \ge k$ for all $\gamma \in \OO_K$.

\begin{rem} Observe that $ \overline{F}_+$ is
`half a fundamental domain' in the sense that every 
$\xi \in \K/\OO_K$ has a representative in 
$\overline{F} = \overline{F}_+ \cup \overline{F}_-$,
where $\overline{F}_- = - \overline{F}_+$ (and only one unless
the representative lies on the boundary of $\overline{F}$). 
It is clearly sufficient to consider $\overline{F}_+$ since
$N(-\xi) = N(\xi)$. For cyclic cubic fields $K$ we could reduce
$\overline{F}_+$ further by exploiting the fact that
$N(\xi^\sigma) = N(\xi)$ for all $\sigma \in \Gal(K/\Q)$.
\end{rem}

\begin{rem}\label{R3}
Occasionally it simplifies computations to use 
fundamental domains other than $\overline{F}$;
they will be denoted by $\Ft$, and we 
will always assume that $\Ft$ has compact 
closure. As an example, take
$$\Ft = \Big\{ \xi = \sum_{i=1}^n a_i\beta_i\, | \, 
     a_1 \in [0,1), \, a_2, \dots, a_n  \in (-1/2,1/2] \Big\}.$$
\end{rem}

\begin{rem}
For real numbers $k', k > 0$ such that $k' > k$ it is clear
that any $k$-covered set is also $k'$-covered, and that
any $k'$-exceptional point is $k$-exceptional.
\end{rem}

\section{The Algorithms}

In this section we will describe the five programs 
({\tt Eu3\_1},\ldots, {\tt Eu3\_5}) which have allowed 
us to compute Euclidean minima $M(K)$ for many cubic number 
fields $K$. Since we used floating point arithmetic to
compute the $M(K)$ (which are rational numbers, at least
in each case we succeded in its computation), a few explanations
are in order. Suppose that we want to show $M(K) = c$ for
a number field $K$. Then we choose $k \le 0.99c$ (as a 
protection against rounding errors), and, using the programs 
{\tt Eu3\_1} -- {\tt Eu3\_3}, we compute cubes $S_j$ which 
contain all $k$-exceptional points (we want to be sure that
they contain every $c$-exceptional point). Then we exploit 
the action of the unit group $E_K$ on these cubes to compute the 
possible exceptional points, and since this is done 
with integer arithmetic, we are able to get exact results.

Experiments with e.g. fields whose minima are known from
hand computations (choosing values of $k$ close to the
minimum and using very small cube lengths $\ell$)
have led us to trust our results. Moreover, all our
results agree with those obtained before (e.g. by Smith \cite{Sm} 
and Taylor \cite{T}) or during the writing of this article (by 
D. Clark \cite{Cla} and R. Qu\^eme). 

Our programs require as input a file called {\tt disc\/}
(i.e. {\tt 985} for the field with discriminant $d = 985$;
for complex fields we used the absolute value of the 
discriminant preceded by "\_", for example {\_{\tt 199}} 
for the field with discriminant $d = -199$). 
This file contains the following data:
\begin{itemize}
\item $\disc K$;
\item the coefficients of the irreducible monic polynomial $f$;
\item
   \begin{itemize}
   \item for real fields the roots of the polynomial $\alpha$, $\alpha'$, 
         $\alpha''$; 
   \item for complex fields the real root $\alpha$, the real and the imaginary 
         part of the complex root; 
   \end{itemize}
\item the coefficients with respect to the base 
         $\{\frac{1}{g},\frac{\alpha}{g}, \frac{\alpha^2}{g}\}$ 
         (where $g$ is defined below) of a system of independent units;
\item the index $g=(\OO_K:\Z [\alpha])$ and in the case of $g\not=1$ also
      $g_x$, $g_y$, $g_z$, where 
$\{1, \alpha, \theta=\frac{g_x}{g}+\frac{g_y}{g}\alpha+\frac{g_z}{g}\alpha^2\}$
      form a $\Z$-basis of $\OO_K$;
\item the value $k>0$;
\item the edge length $\ell$ of the cubes;
\item the coordinates of the uncovered cubes.
\end{itemize}

For the sake of simplicity we treat only
cubes having the same size; thus a cube is
uniquely determined by its leftmost corner and
the edge length $\ell$. 
We therefore start with the four cubes making up
$\overline F_+$, and the initial file {\tt 985}, 
for example, looks as in Table 1 below:

\medskip

\hbox{ 
\begin{tabular}{rrrr}
\multicolumn{4}{c}{Table 1} \\
\multicolumn{4}{c}{} 	     \\
\tt 985    &  \tt1      & \tt -6      & \tt  -1 \\
  \multicolumn{4}{r}{\tt -2.93080160017276} \\
  \multicolumn{4}{r}{\tt -0.16296185677753} \\
  \multicolumn{4}{r}{\tt  2.09376345695029 } \\
& \tt 0       & \tt 1       & \tt 0 \\
& \tt 2      & \tt -1       & \tt 0 \\
&        &  \tt 1 & \\
&       & \tt 0.9 & \\
&       & \tt 0.5 & \\
& \tt 0      & \tt  -0.5   & \tt  -0.5 \\
& \tt 0      & \tt  -0.5   & \tt   0 \\
& \tt 0      & \tt   0     & \tt  -0.5 \\
& \tt 0      & \tt   0     & \tt   0   \end{tabular} 

\hfill

\begin{tabular}{rrrr}
\multicolumn{4}{c}{Table 2} \\
\multicolumn{4}{c}{} 	     \\
\tt 985	 & \tt 1    & \tt -6	& \tt -1 \\
\multicolumn{4}{r}{\tt -2.93080160017276} \\
\multicolumn{4}{r}{\tt -0.16296185677753} \\
\multicolumn{4}{r}{\tt  2.09376345695029} \\
& \tt 0	& \tt  1     & \tt 0 \\
& \tt 2	& \tt -1     & \tt 0 \\
& 	& \tt 1   &  \\
& 	& \tt 0.9 &  \\
& 	& \tt 0.1 &  \\
&  \tt 0.3 & \tt -0.5 & \tt -0.5 \\
&  \tt 0.3 & \tt -0.5 & \tt -0.3 \\
&   \multicolumn{2}{r}{\tt  . . .} & \\
% &  \tt 0.4  & \tt 0.4  & \tt 0.3 \\
&  \tt 0.4  & \tt 0.4  & \tt 0.4 \\    \end{tabular} 
 
\hfill

\begin{tabular}{rrrr}
\multicolumn{4}{c}{Table 3} \\
\multicolumn{4}{c}{} 	     \\
\tt 985     & \tt  1     & \tt  -6      & \tt -1 \\
&   \multicolumn{2}{r}{\tt  . . .} & \\
& 	&  \tt 0.02 &  \\
& \tt 0.38 & \tt -0.22  & \tt 0.38 \\
& \tt 0.38 & \tt -0.22  & \tt 0.4  \\
& \tt 0.38 & \tt -0.2   & \tt 0.38 \\
& \tt 0.38 & \tt -0.18  & \tt 0.38 \\
& \tt 0.38 & \tt -0.2   & \tt 0.4  \\
& \tt 0.4  & \tt -0.22  & \tt 0.38 \\
& \tt 0.4  & \tt -0.22  & \tt 0.4  \\
& \tt 0.4  & \tt -0.2   & \tt 0.38 \\
& \tt 0.4  & \tt -0.18  & \tt 0.38 \\
& \tt 0.4  & \tt -0.2   & \tt 0.4  \\  \end{tabular}  

}

\medskip

\noindent
We now run the programs  {\tt Eu3\_1}, {\tt Eu3\_2} and 
{\tt Eu3\_3}, which will be described in the sequel, 
on the file {\tt disc\/}. The programs {\tt Eu3\_4} and 
{\tt Eu3\_5} will be explained before Prop. \ref{P4} and 
Cor. \ref{C1}, respectively.

\subsection*{{\tt Eu3\_1}} \label{scansione}
This program first asks for a discriminant, then reads the 
corresponding file {\tt disc\/}. The first eight lines of 
{\tt disc\/} are copied to the (temporary) file {\tt disc.new\/}.
The next input is an integer $f$ which is the factor
by which we divide the edge length of the cubes.  
We have used $f \in \{1, 2, 4, 5\}$, depending on the size
of {\tt disc\/}; of course the choice $f = 1$ is only useful
after $k$ has been replaced by some $k' > k$.
Thus {\tt Eu3\_1} reads $\ell$ from {\tt disc\/} and writes 
$\ell/f$ to {\tt disc.new\/}. Moreover, if no file 
{\tt disc.p\/} exists, {\tt Eu3\_3} creates one and writes
the translation vector {\tt (0, 0, 0)} into it.

Now {\tt Eu3\_1} splits each cube read from {\tt disc\/}
into $f^3$ smaller ones, computes an upper bound $B = B(S)$
of the minimum for each of these subcubes, and writes
those $S$ with $B(S) \ge k$ to the file {\tt disc.new\/}.
Having reached the end of the file {\tt disc\/}, it copies
{\tt disc\/} to {\tt disc.bak\/} (a security backup) and 
{\tt disc.new\/} to {\tt disc\/}.

How do we bound the minimum on a cube 
$S=[a,a+\ell]\times[b,b+\ell]\times[c,c+\ell]$?
Since the norm is the product of the three $\K$-valuations,
we only need to find bounds for $|\xi|_j$, where 
$\xi = x + y\alpha + z\theta \in S$. But since $|\xi|_j$ is 
a linear function of $x, y, z$, it takes its maximum at
the corners of the subcubes. Instead of computing $|\cdot|_j$
at all eight corners and taking the maximum of these 
values as our upper bound (as the programs with which we computed
the tables at the end did), we 
can use a trick due to Roland Qu\^eme \cite{Q} which 
gives this bound at one stroke (but which doesn't seem to work
except for valuations corresponding to real embeddings):
%% $$ N(\xi) \ \le \ \prod_j \Big(|(a + \ts{\frac{\ell}2}) +
%% 	(b + \ts{\frac{\ell}2})\alpha + (c + \ts{\frac{\ell}2})\theta|_j
%% 	+ \frac{\ell}2(1+|\alpha|_j+|\theta|_j)\Big) \ =: \ B.$$ 
in fact, the value of $|\cdot |_j$ at the eight corners is one of
$$|\xi_0 \pm \ts{\frac{\ell}2} 
	\pm \frac{\ell}2 \alpha \pm \frac{\ell}2 \theta|_j,
	\qquad \text{ where } \quad \xi_0 = 
	(a + \ts{\frac{\ell}2}) + (b + \ts{\frac{\ell}2})\alpha +
  	(c + \ts{\frac{\ell}2})\theta,$$
the corners corresponding to the different choices of the
signs. Using the triangle inequality we easily get 
$$|\xi_0 \pm \ts{\frac{\ell}2} \pm \frac{\ell}2 \alpha 
	\pm \frac{\ell}2 \theta|_j \ \le \
  |\xi_0|_j + \frac{\ell}2(1+|\alpha|_j+|\theta|_j).$$ 
On the other hand, choosing the signs in such a way that
$\xi_0$, $\pm 1$, $\pm \alpha$ and $\pm \theta$ all have
the same sign (in the embedding corresponding to $|\cdot |_j$; here 
is where we need that the embedding is real)
we see that this bound is best possible. 

Using this method of bounding $N(S-\gamma)$, {\tt Eu3\_1}
will start looking for translation vectors in {\tt disc.p};
if there is no element in this file such that $N(S-\gamma) < k$
we search for translation vectors in the set 
\begin{equation}\label{setI}
   I = \{x+y\alpha+z\theta | (x,y,z)\in\Z^3, 
		|x|\leq M_x, |y|\leq M_y, |z|\leq M_z\}, \end{equation}
where $M_x, M_y, M_z$ were usually chosen (depending on $\ell$)
as follows:
\smallskip
\begin{center}
\begin{tabular}{|c|c|c|c|} \hline
 & $\ell \ge 0.02$ & $0.01 \ge \ell \ge 0.001$ & $\ell \le 0.0005$ \\ \hline
 $M_x$ &  $8$  & $19$ & $30$ \\
 $M_y$ &  $5$  & $12$ & $17$ \\
 $M_z$ &  $2$  & \phantom{1}$3$ &  \phantom{1}$5$ \\ \hline \end{tabular} 
\end{center} 
\smallskip
\noindent
The nonsymmetric limits were suggested by experience. 
If we find a translation vector $\gamma \in \OO_K$ such that 
$N(S-\gamma) < k$ we write $\gamma$ to the file {\tt disc.p\/}.
There are several reasons for proceeding like this:
\begin{enumerate}
\item If $N(S-\gamma) < k$, then $\gamma$ has a good
      chance of satisfying $N(S'-\gamma) < k$ for cubes
      $S'$ in the vicinity of $S$. By searching 
      {\tt disc.p\/} first we actually save much CPU time.
\item If we find that we have to replace $k$ by some $k' < k$,
      we have to redo the computations from start; we are,
      however, able to use the translation vectors found in
      the previous runs. In fact, our programs allow the
      option of searching for new vectors or just using
      those in {\tt disc.p}.
\end{enumerate}

After the first run of {\tt Eu3\_1} with $f = 5$
(which took 0.63 seconds of CPU time on an RS 6000), 
the file {\tt 985} looks as in Table 2.
It contains exactly 106 noncovered cubes. The file
{\tt 985.p} contains the following translation vectors:
\begin{verbatim}
(0 0 0), (0 1 0), (0 2 1), (1 0 0), (-1 -4 -1), (0 3 -1), (-1 -3 2),	
(0 1 -1), (0 2 0), (3 0 -1), (0 2 -1), (-1 0 0), (-1 -4 2), (-2 0 0)
\end{verbatim}

\subsection*{{\tt Eu3\_2}}
This program acts like {\tt Eu3\_1} with the difference that 
the original cube is written to {\tt disc.new\/} as soon as
one of its subcubes  cannot be covered. In other words:
{\tt Eu3\_2} eliminates those cubes whose subcubes of
length $\ell/f$ can all be covered. This is convenient
if we already have to deal with a lot of cubes and a further 
division done as by {\tt Eu3\_1} is likely to lead to an enormous
number of smaller cubes. We usually run {\tt Eu3\_2} before
using {\tt Eu3\_3} in order to save CPU time.

\subsection*{{\tt Eu3\_3}}
Scanning through all the integers of $I$ (see (\ref{setI})) 
takes much time.  We can avoid searching for explicit 
translations if we proceed as follows: we multiply a cube 
$T$ with a non-torsion unit $\eps$, translate the result back 
into the fundamental domain by subtracting $\beta \in \OO_K$ 
and look whether $\eps T - \beta$ intersects one of the cubes 
not yet covered.  The program does not really compare the
oblique prism $\eps T - \beta$ with the uncovered cubes but 
rather uses the smallest box $S$ which contains $T$  and has 
faces parallel to the coordinate planes 
(this will be improved in the next version of our programs;
we also remark that -- in order to avoid rounding errors -- 
we do not compare $\eps T - \beta$ with $T$ and $-T$ but with a slightly 
larger cube obtained by adding (resp. subtracting) $\frac12 \ell$ 
to (resp. from) the coordinates of the corners of $T$).
Evidently we have to compare the box also with the `opposite' 
cubes, i.e. the cubes multiplied by $-1$, since we only kept the 
`bad' cubes of half the fundamental domain $\overline F_+$.  
If there is no intersection, the cube itself can be eliminated.

The reason why this program is so successful is the following:
suppose that $S$ is a subcube such that $\eps S - \alpha$
(where $\alpha \in \OO_K$ is the element needed to translate
$\eps S$ back into $\overline{F}$) is covered by 
$\gamma \in \OO_K$, i.e. $N(\eps S - \alpha - \gamma) < k$.
This means of course that  $N(S - \beta) < k$ for
$\beta = \eps^{-1}(\alpha+\gamma)$: but $\beta$ will
usually have much larger coefficients than those scanned in 
(\ref{setI}). Moreover, in general $\eps S - \alpha$ will 
not be covered by a single element $\gamma \in \OO_K$,
which means that we would have to divide $S$ into subcubes
before we could cover it directly.

A run of {\tt Eu3\_1} on the file in Table 2, 
again with $f = 5$, leaves only $27$ subcubes uncovered. 
Running {\tt Eu3\_3} twice on the file obtained
we are left with $10$ uncovered cubes (see Table 3).
Running {\tt Eu3\_2} on the file in Table 3 deletes 
{\tt (0.38 -0.18  0.38)} and {\tt (0.4 -0.18  0.38)}. 
Now we have covered $\overline{F}$ except for the set
$T = [0.38, 0.42] \times [-0.22, -0.18] \times [0.38, 0.42]$.
Multiplying the corners $P$ of $T$ by the unit $\alpha$
we find 
\medskip

\begin{center}
\begin{tabular}{|c|c|}\hline
 $P$ & $\alpha P$ \\ \hline
 $0.38-0.22\alpha+0.38\alpha^2$ & 
 	$0.38 - 0.34\alpha +0.40\alpha^2 + 3\alpha - \alpha^2$ \\  
 $0.38-0.22\alpha+0.42\alpha^2$ & 
	$0.42 - 0.10 \alpha + 0.36 \alpha^2 + 3\alpha - \alpha^2$ \\
 $0.38-0.18\alpha+0.38\alpha^2$ & 
	$0.38 - 0.34\alpha + 0.44 \alpha^2 + 3\alpha - \alpha^2$ \\ 
 $0.38-0.18\alpha+0.42\alpha^2$ & 
	$0.42 -0.10 \alpha + 0.40 \alpha^2 + 3\alpha - \alpha^2$ \\
 $0.42-0.22\alpha+0.38\alpha^2$ &
	$0.38 -0.30 \alpha +0.40\alpha^2 + 3\alpha - \alpha^2$ \\
 $0.42-0.22\alpha+0.42\alpha^2$ & 
	$0.42 -0.06 \alpha + 0.36 \alpha^2 + 3\alpha - \alpha^2$ \\
 $0.42-0.18\alpha+0.38\alpha^2$ & 
	$0.38 - 0.30\alpha +0.44\alpha^2 + 3\alpha - \alpha^2$ \\ 
 $0.42-0.18\alpha+0.42\alpha^2$ & 
	$0.42 -0.06 \alpha + 0.40\alpha^2 + 3\alpha - \alpha^2$ \\ \hline
\end{tabular}
\end{center}
\medskip

This shows that $\alpha T$ is contained in the set
$$T' = [0.38, 0.42] \times [-0.34,-0.06] \times 
		[0.36, 0.44] + 3\alpha - \alpha^2$$
(in general, this is a very crude estimate; the next version of 
our programs will take the actual shape of $\eps T$ into account).
Observe that $T'-3\alpha + \alpha^2$ does not intersect $-T$.
If we were to keep dividing the uncovered cubes, this picture
would not change: the length of the cubes would become smaller
and smaller, and so would the size of our uncovered set $T$,
but $\alpha T - 3\alpha + \alpha^2$ would always have
points in common with $T$. This is the situation 
which is described in

\begin{prop}\label{T2.9}
Let $K$ be a number field and $\eps$ a non-torsion unit 
of $E_K$. Suppose that $T \subset \Ft$ (vf. Rem. \ref{R3}) 
has the following property:
\begin{quotation}
there exists a unique $\beta \in \OO_K$ such that, 
for all $\xi \in T$, the element $\eps\xi - \beta$ lies 
in a $k$-covered region of $\Ft$ or again in $T$.
\end{quotation}
Then every $k$-exceptional point $\xi_0 \in T$ satisfies
$|\xi_0-\frac{\beta}{\eps-1}|_j = 0$ for every $\K$-valuation
$|\cdot|_j$ such that $|\eps|_j > 1$.
If, moreover, $|\eps|_j \ne 1$ for all $\K$-valuations, then
the sequence $\xi_0, \xi_1, \xi_2, \ldots$ of $k$-exceptional points
defined by the recursion $\xi_{i+1} = \eps\xi_i - \beta$ satisfies
$\lim_{i \to \infty} \xi_i = \frac{\beta}{\eps-1}$.
\end{prop}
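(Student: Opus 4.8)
The plan is to treat the recursion $\xi_{i+1} = \eps\xi_i - \beta$ as an affine dynamical system on $\K$ and to analyse it one $\K$-valuation at a time. First I would record its unique fixed point $\zeta = \frac{\beta}{\eps-1}$, i.e.\ the solution of $\zeta = \eps\zeta - \beta$, which lies in $K$ because $\eps-1$ is a nonzero element of $\OO_K$. Then for every valuation
$$|\xi_{i+1} - \zeta|_j = |\eps(\xi_i - \zeta)|_j = |\eps|_j\,|\xi_i - \zeta|_j,$$
so that $|\xi_i - \zeta|_j = |\eps|_j^{\,i}\,|\xi_0 - \zeta|_j$ by iteration. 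This reduces both assertions to controlling the scalar sequences $|\xi_i - \zeta|_j$, and it uses only that each $|\cdot|_j$ is multiplicative on $\K$. That multiplicativity follows because $x \mapsto \sum_i x_i\phi_j(\beta_i)$ extends $\phi_j$ to a ring homomorphism $\K \to \C$ (the polynomial relations among the powers of $\alpha$ are preserved by $\alpha \mapsto \alpha_j$), so $|\cdot|_j$ respects the ring structure of $\K$; consequently $N$ is multiplicative as well.

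The crucial preliminary step is to show that the whole orbit stays inside $T$ and consists of $k$-exceptional points. I would prove by induction that each $\xi_i$ is $k$-exceptional: if some $\xi_{i+1} = \eps\xi_i - \beta$ were not, there would exist $\gamma \in \OO_K$ with $N(\xi_{i+1} - \gamma) < k$; but then $N(\eps\xi_i - \beta - \gamma) = N(\eps)\,N(\xi_i - \eps^{-1}(\beta+\gamma)) = N(\xi_i - \eps^{-1}(\beta+\gamma))$, using $N(\eps)=1$ and multiplicativity of $N$, and since $\eps^{-1}(\beta+\gamma) \in \OO_K$ this would contradict the exceptionality of $\xi_i$. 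Knowing that $\xi_{i+1}$ is exceptional, it cannot lie in a $k$-covered region of $\Ft$, so the trapping hypothesis on $T$ forces $\xi_{i+1} \in T$, and the induction proceeds.

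With the orbit confined to $T$, the first claim is a boundedness argument. Since $T \subset \Ft$ and $\Ft$ has compact closure (Remark \ref{R3}), each continuous function $|\cdot - \zeta|_j$ is bounded on $T$, say by $B_j$. For any $j$ with $|\eps|_j > 1$ the relation $|\xi_i - \zeta|_j = |\eps|_j^{\,i}\,|\xi_0 - \zeta|_j \le B_j$ can hold for all $i$ only if $|\xi_0 - \zeta|_j = 0$, which is exactly the first assertion. For the second assertion, assuming $|\eps|_j \ne 1$ for every $j$, I would split the valuations: when $|\eps|_j > 1$ the first part gives $|\xi_i - \zeta|_j = 0$ identically, while when $|\eps|_j < 1$ the factor $|\eps|_j^{\,i}$ forces $|\xi_i - \zeta|_j \to 0$. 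Hence $|\xi_i - \zeta|_j \to 0$ for every $j$, and applying Proposition \ref{P0} to the sequence $\xi_i - \zeta \in \K$ yields $\xi_i \to \zeta = \frac{\beta}{\eps-1}$.

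The only genuinely delicate point is the multiplicativity invoked in the first two paragraphs: because $N$ and the $|\cdot|_j$ are not honest norms or valuations on $\K$, one must check that they remain multiplicative after extension from $K$ to $\K = \R^n$, and that $\eps^{-1}(\beta+\gamma)$ is genuinely integral. Everything else is the standard contraction/expansion dichotomy for the linear map $\eps$, made rigorous by the compactness of $\overline{\Ft}$ (which is exactly why Remark \ref{R3} insists on it) and by Proposition \ref{P0}, which converts coordinatewise valuation decay into honest convergence in $\R^n$.
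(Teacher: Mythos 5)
Your proof is correct and takes essentially the same route as the paper's: the fixed point $\zeta = \frac{\beta}{\eps-1}$, the iterated identity $|\xi_i-\zeta|_j = |\eps|_j^i\,|\xi_0-\zeta|_j$, boundedness of the orbit from the compact closure of $\Ft$, and Proposition \ref{P0} to convert valuation-wise decay into convergence. The only differences are that you spell out details the paper leaves implicit — the explicit induction (via multiplicativity of $N$ and integrality of $\eps^{-1}(\beta+\gamma)$) showing each $\xi_i$ is $k$-exceptional and hence trapped in $T$, where the paper simply cites the invariance $M(K,\xi)=M(K,\eps\xi-\beta)$, and the ring-homomorphism justification of multiplicativity on $\K$.
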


\begin{proof}
Suppose that $\xi_0 \in T$ is $k$-exceptional. Since
$M(K,\xi) = M(K,\eps\xi-\beta)$, our assumption implies that 
all the $\xi_i$ defined by $\xi_{i+1} = \eps\xi_i - \beta$
are $k$-exceptional points in $T$. Now $\zeta = \frac{\beta}{\eps-1}$
is the fixed point of the map $\xi \too \eps\xi-\beta$.
Induction shows that $\xi_i - \zeta = \eps^i(\xi_0 - \zeta)$
for all $i \ge 0$. In particular we see that
\begin{equation}\label{E1}
 |\xi_i-\zeta|_j \ = \ |\eps|_j^i\,|\xi_0-\zeta|_j.
\end{equation}
Now we claim that there exists a constant $C > 0$ such
that $|\xi_i-\zeta|_j \le C$ for all $i \ge 0$ ($C = C(j)$ may
depend on $j$, but we can always choose $C$ as the maximum
of the (finitely many) $C(j)$). In fact, since
$\xi_i, \zeta \in \Ft$, we see that
their difference is an element of $2\Ft \subset \K$. 
Since $2\Ft$ has compact closure and $|\cdot|_j$ is continuous,
$|\cdot|_j$ has a maximum $C$ on the closure of $2\Ft$
and hence is bounded on $2\Ft$.

Assume that $|\eps|_j > 1$; then the fact that the
left hand side of Equ. (\ref{E1}) is bounded implies
that $|\xi_0-\zeta|_j = 0$. This in turn gives immediately
$|\xi_i-\zeta|_j = 0$. If, moreover,  $|\eps|_j \ne 1$
for all $j \le r+s$, then either $|\eps|_j > 1$ and
$|\xi_i-\zeta|_j = 0$, or  $|\eps|_j < 1$ and
$0 = \lim_{i \to \infty} |\xi_i-\zeta|_j.$
This implies $\lim \xi_i = \zeta$ by Prop. \ref{P0}. 
\end{proof}

In our case there are three embeddings; we have $|\alpha|_j > 1$
for $j = 1, 3$, and $|\alpha|_j < 1$ for $j = 2$. From
Prop. \ref{P1} we can deduce that every $0.9$-exceptional
point $\xi \in T$ satisfies
$|\xi - \zeta|_1 = |\xi - \zeta|_3 = 0$, where
$\zeta = \frac{3\alpha-\alpha^2}{\alpha-1} = 
 \frac15(2-\alpha+2\alpha^2)$. In order to show that
$\xi$ is in fact the only $0.9$-exceptional point $\xi \in T$ 
we have to prove $|\xi - \zeta|_2 = 0$; this is done by using
the inverse of the unit $\alpha$:

\begin{thm}\label{T1}
Let $K$ be a number field, $T$ a compact subset of $\Ft$,
and let $\eps \in E_K$ be a non-torsion unit. Suppose that
\begin{enumerate}
\item there is a $\beta \in \OO_K$ such that, for all 
	$\xi \in T$, $\eps\xi - \beta$ lies in a $k$-covered
	 region of $\Ft$ or again in $T$;
\item for all $\xi \in T$ there is a $\gamma \in \OO_K$
	such that $\eps^{-1}\xi - \gamma$ lies in a $k$-covered
	 region of $\Ft$ or again in $T$;
\item $|\eps|_j \ne 1$ for $1 \le j \le r+s$.
\end{enumerate}
Then $\frac{\beta}{\eps-1}$ is the only possible $k$-exceptional
point of $T$.
\end{thm}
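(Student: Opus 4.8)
The plan is to pin down any candidate $k$-exceptional point $\xi_0\in T$ one valuation at a time and then invoke Proposition \ref{P0}. Write $\phi$ for the affine map $\xi\mapsto\eps\xi-\beta$ and $\zeta=\frac{\beta}{\eps-1}$ for its fixed point. Hypothesis (1) is exactly the hypothesis of Proposition \ref{T2.9}, so that result already tells us that any $k$-exceptional $\xi_0\in T$ satisfies $|\xi_0-\zeta|_j=0$ for every valuation with $|\eps|_j>1$. It remains to treat the valuations with $|\eps|_j<1$; by hypothesis (3) these are the only other ones, so once they are handled Proposition \ref{P0} will give $\xi_0=\zeta$.

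For those contracting valuations I would like to run the same argument for the unit $\eps^{-1}$, whose expanding valuations are precisely the contracting ones of $\eps$. Given a $k$-exceptional $\xi_0\in T$, hypothesis (2) lets me choose, point by point, an element $\gamma_i\in\OO_K$ and form the backward orbit $\eta_0=\xi_0$, $\eta_{i+1}=\eps^{-1}\eta_i-\gamma_i$, each term landing in a covered region or in $T$. Since multiplication by the unit $\eps^{-1}$ and translation by an element of $\OO_K$ leave the quantity $N(\,\cdot-\OO_K)$ unchanged, every $\eta_{i+1}$ is again $k$-exceptional; hence it cannot lie in a covered region and must lie in $T$, so the whole backward orbit stays in the compact set $T$.

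The difficulty, and what I expect to be the main obstacle, is that Proposition \ref{T2.9} needs a single translation vector, whereas hypothesis (2) lets the $\gamma_i$ vary; without control of them the clean relation $\eta_i-\zeta=\eps^{-i}(\eta_0-\zeta)$ collapses. The device I would use to repair this is the fundamental domain itself. From $\eta_{i+1}=\eps^{-1}\eta_i-\gamma_i$ one gets $\eta_i=\eps\eta_{i+1}+\eps\gamma_i$, so $\eta_i$ and $\phi(\eta_{i+1})=\eps\eta_{i+1}-\beta$ differ by the algebraic integer $\eps\gamma_i+\beta\in\OO_K$. But $\eta_i\in T\subset\Ft$ and, by hypothesis (1) applied to $\eta_{i+1}\in T$, the point $\phi(\eta_{i+1})$ lies in $\Ft$ as well; since $\Ft$ is a fundamental domain (cf. Rem. \ref{R3}), two of its points that are congruent modulo $\OO_K$ must coincide. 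Hence $\eta_i=\phi(\eta_{i+1})$, i.e. the backward orbit is genuinely the inverse iteration $\eta_{i+1}=\phi^{-1}(\eta_i)$ for the one fixed map $\phi$.

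With this in hand the argument of Proposition \ref{T2.9} applies to $\phi^{-1}$: since $\phi^{-1}$ fixes $\zeta$ and has linear part $\eps^{-1}$, induction gives $\eta_i-\zeta=\eps^{-i}(\eta_0-\zeta)$, whence $|\eta_i-\zeta|_j=|\eps|_j^{-i}\,|\eta_0-\zeta|_j$. For $|\eps|_j<1$ the factor $|\eps|_j^{-i}$ tends to infinity, while the left-hand side stays bounded because all $\eta_i$ lie in the compact set $T$ and $|\cdot|_j$ is continuous; therefore $|\xi_0-\zeta|_j=|\eta_0-\zeta|_j=0$. Combined with the expanding valuations from the first step this yields $|\xi_0-\zeta|_j=0$ for all $j$, and Proposition \ref{P0} forces $\xi_0=\zeta$, so $\frac{\beta}{\eps-1}$ is the only possible $k$-exceptional point of $T$. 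The one genuinely delicate point is the fundamental-domain step: uniqueness of representatives in $\Ft$ can fail on its boundary, so to make $\eta_i=\phi(\eta_{i+1})$ rigorous one should either take $T$ in the interior of $\Ft$ or check that any boundary coincidences do not disturb the vanishing of the valuations.
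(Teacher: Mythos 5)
Your proposal is correct and is essentially the paper's own proof: both dispose of the expanding valuations via Prop.~\ref{T2.9}, then use the fundamental-domain observation to force $\gamma = -\eps^{-1}\beta$ (the paper does this in a single step at $\xi$, writing $\eps\xi_1-\beta = \xi-(\beta+\eps\gamma)\in T$, rather than along the whole backward orbit), and finally run the same iteration argument for $\eps^{-1}$ before concluding with Prop.~\ref{P0}. The boundary caveat you flag at the end is a genuine subtlety, but it is equally present --- and equally unremarked --- in the paper's proof.
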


\begin{proof}
Let $\xi \in T$ be a $k$-exceptional point. 
If $|\eps|_j > 1$ then Prop. \ref{T2.9} shows that 
$|\xi-\frac{\beta}{\eps-1}|_j = 0$. The other $\K$-valuations
$|\cdot|_j$ satisfy $|\eps|_j < 1$ because of 3., and we
see $|\eps^{-1}|_j > 1$. Since $\xi$ is $k$-exceptional, so is
$\xi_1 = \eps^{-1}\xi-\gamma \in T$. Now
$\eps\xi_1 - \beta = \eps(\eps^{-1}\xi-\gamma) - \beta
 = \xi - (\beta + \eps \gamma)$; but $\xi \in T$
and $\xi - (\beta + \eps \gamma) \in T$ imply that
$\beta + \eps \gamma = 0$, i.e. $\gamma = -\eps^{-1}\beta$.
Therefore the $\gamma$ in 2. is uniquely determined,
and we can apply Prop. \ref{T2.9} with $\eps^{-1}$ and $\gamma$
instead of $\eps$ and $\beta$. This shows that
any $k$-exceptional point $\xi \in T$ satisfies
$|\xi - \frac{\gamma}{\eps^{-1}-1}|_j = 0$ for all $\K$-valuations
with $|\eps|_j < 1$. But $\frac{\gamma}{\eps^{-1}-1} = 
\frac{\eps\gamma}{1-\eps} = \frac{\beta}{\eps-1}$.
Thus $|\xi - \frac{\beta}{\eps-1}|_j = 0$
for all $1 \le j \le r+s$, and by Prop. \ref{P0} this
implies that $\xi = \frac{\beta}{\eps-1}$. 
\end{proof}

\begin{rem}
This theorem is attributed to Cassels in \cite{BSD}.
\end{rem}

\begin{rem}
For every number field $K$ there exists a complete system of 
independent units $\eps_i$ such that $|\eps_i|_j \ne 1$. This
follows directly from Minkowski's proof of Dirichlet's
unit theorem. 
\end{rem}

\begin{rem}
If condition 1. of Thm. \ref{T1} is satisfied but 2. is not
(for example if there is a second uncovered subset $T'$ 
such that $\eps^{-1}T$ and $T'$ have common points mod $\OO_K$),
then $T$ might contain irrational exceptional points (converging
to $\frac{\beta}{\eps-1}$) as in the last sentence of 
Prop. \ref{T2.9}.
\end{rem}

\begin{rem}
If $\eps T$ intersects $-T$, try to apply Thm. \ref{T1}
with $\eps$ replaced by $-\eps$.
\end{rem}

\begin{rem}
Suppose that $(\frac12, \frac12,\frac12)$ is one of the 
$k$-exceptional points; in this case, there will be uncovered
sets in all eight corners of the fundamental domain $\overline{F}$.
In order to apply Prop. \ref{T2.9} one has to choose
$\Ft$  e.g. as in Remark \ref{R3}, because this allows us to
collect these uncovered cubes into one set $T$ lying in the center
of $\Ft$.
\end{rem}

In our example, computations similar to those above with 
$\alpha^{-1} = -6 + \alpha + \alpha^2$ instead of $\alpha$ 
show that $\alpha^{-1}T$ is contained in 
$$T'' = [0.26,0.54] \times [-0.24, -0.16] \times 
		[0.38,0.42] + 3 - \alpha.$$
Now Thm. \ref{T1} shows that the only possible $k$-exceptional 
point ($k = 0.9$) in $T$ is $\xi = \frac{3\alpha-\alpha^2}{\alpha-1} = 
	\frac25 - \frac15 \alpha + \frac25 \alpha^2$.

\medskip
\noindent 
{\tt Eu3\_4}. This program does the necessary computations:
it checks whether a cube $S$ multiplied 
by a non-torsion unit $\eps$ and translated back into the 
fundamental domain $F$ intersects either $S$ or $-S$; in 
both cases, the possible exceptional point is computed and 
written to the file {\tt disc.n}. We can also search for orbits of
length $\ge 2$ by replacing $\eps$ by $\eps^n$ for some
$n \ge 2$, provided $\ell$ is small enough.
Verifying the conditions of Thm. \ref{T1} are currently
still done by hand; see, however, Rem. \ref{R12}.
The actual computation of the possible exceptional point is done
using integer arithmetic; whenever the precision was insufficient
(e.g. for $\disc K = -680$ or $- 728$) we used PARI.
\smallskip

The next question is how to compute $M(K,\xi)$. This is 
done as follows: first we notice that $M(\K,\xi)
= M(\K,\xi+\alpha)$ for $\alpha \in \OO_K$, i.e. $M(\K,\xi)$
only depends on the coset $\xi + \OO_K$ of $\xi$ in $\K/\OO_K$. Next 
we observe that $M(\K,\xi) = M(\K,\eps\xi)$ for units $\eps \in E_K$.
For $\xi \in \K/\OO_K$ put $\Orb_\eps(\xi) = \{\eps^n\xi: n \in \Z\}$
(this is the orbit of $\xi$ under the action of $\eps$)
and  $\Orb(\xi) = \{ \eps\xi: \eps \in E_K\}$. Then the
Euclidean minimum is constant on every orbit.

\begin{prop}\label{P4}
For number fields $K$ with unit rank $\ge 1$, the following
properties of $\xi \in \K/\OO_K$ are equivalent:
\begin{enumerate}
\item[i)] $\Orb(\xi)$ is finite;
\item[ii)] $\Orb_\eps(\xi)$ is finite for some non-torsion 
	unit $\eps \in E_K$;
\item[iii)] $\xi \in K/\OO_K$.
\end{enumerate}
\end{prop}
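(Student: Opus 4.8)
The plan is to prove the cycle of implications $(iii)\Rightarrow(i)\Rightarrow(ii)\Rightarrow(iii)$. The implication $(i)\Rightarrow(ii)$ is immediate: since the unit rank is $\ge 1$ there exists a non-torsion unit $\eps$, and $\Orb_\eps(\xi)\subseteq\Orb(\xi)$, so finiteness of the latter forces finiteness of the former.

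For $(iii)\Rightarrow(i)$ I would use that $\OO_K$ is a $\Z$-lattice whose $\Z$-basis is simultaneously a $\Q$-basis of $K$. Thus any $\xi\in K$ can be written with a common denominator, $\xi\in\frac1m\OO_K$ for some positive integer $m$. Since every unit satisfies $\eps\OO_K=\OO_K$, we get $\eps\xi\in\frac1m\OO_K$ for all $\eps\in E_K$; hence the whole orbit $\Orb(\xi)$ lands in the finite group $\frac1m\OO_K/\OO_K\cong\OO_K/m\OO_K$, and is therefore finite.

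The heart of the matter is $(ii)\Rightarrow(iii)$. Suppose $\Orb_\eps(\xi)$ is finite for the non-torsion unit $\eps$. By the pigeonhole principle there are integers $a>b$ with $\eps^a\xi\equiv\eps^b\xi\pmod{\OO_K}$, so that $\eps^b(\eps^{a-b}-1)\xi\in\OO_K$; multiplying by the unit $\eps^{-b}$ gives $\delta\xi\in\OO_K$, where $\delta=\eps^{k}-1$ and $k=a-b\ge1$. Because $\eps$ is non-torsion we have $\eps^k\ne1$, so $\delta$ is a nonzero element of $\OO_K$. It remains to deduce $\xi\in K$ from $\delta\xi\in\OO_K$, and this is where the only real subtlety lies: $\K$ is a ring but not a field, so one cannot simply divide. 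I would handle this by invoking the earlier remark that $\K$ is a ring whose multiplication extends that of $K$: since $\delta\in K$ is nonzero, its field inverse $\delta^{-1}$ lies in $K\subseteq\K$ and satisfies $\delta^{-1}\delta=1$ inside $\K$. Then associativity in $\K$ gives $\xi=\delta^{-1}(\delta\xi)$, and as both $\delta^{-1}$ and $\delta\xi$ lie in $K$, their product is computed inside the field $K$; hence $\xi\in K$, i.e.\ $\xi\in K/\OO_K$.

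The main obstacle is thus not the pigeonhole step but this final division: one must be sure that $\delta$ is invertible in $\K$ and that the product $\delta^{-1}(\delta\xi)$ really falls back into $K$. Equivalently, one can note that $N(\delta)=|N_{K/\Q}(\delta)|\ne0$, which under the identification $\K\cong\R^r\times\C^s$ is exactly the condition that every component of $\delta$ is nonzero, i.e.\ that $\delta$ is a unit of the ring $\K$. I expect the write-up to be short once this point is made precise; the hypothesis on the unit rank enters only to guarantee the non-torsion unit used in $(i)\Rightarrow(ii)$ and $(ii)\Rightarrow(iii)$.
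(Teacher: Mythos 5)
Your proof is correct and takes essentially the same route as the paper's: i) $\Rightarrow$ ii) trivially, ii) $\Rightarrow$ iii) by pigeonhole and division by the nonzero element $\eps^k-1$, and iii) $\Rightarrow$ i) by trapping the orbit in a finite quotient (you use $\frac1m\OO_K/\OO_K$ with $m\in\Z$, where the paper works modulo the denominator $\beta\in\OO_K$ and obtains the sharper bound $\#\Orb(\xi)\le N(\beta)$, refined to $\#(\OO_K/\beta\OO_K)^\times$ when the class number is $1$ and $(\alpha,\beta)=1$). If anything you are more careful than the paper, which simply writes $\xi = \frac{\alpha}{\eps^n-1}+\OO_K$ without comment; your observation that $\eps^k-1$ is invertible in the ring $\K\cong\R^r\times\C^s$ (equivalently $N(\eps^k-1)\neq 0$), so that $\xi=(\eps^k-1)^{-1}\bigl((\eps^k-1)\xi\bigr)$ is a product of two elements of $K$, makes that division step explicit.
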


\begin{proof}
The implication i) $\Longrightarrow$ ii) is obvious. Assume that
$\eps$ is a non-torsion unit such that $\Orb_\eps(\xi)$ is finite.
Then there exists an $n \in \N$ such that $\eps^n\xi = \xi$, and
this implies that $\xi = \frac{\alpha}{\eps^n-1} + \OO_K$ for
some $\alpha \in \OO_K$, i.e. $\xi \in K/\OO_K$.

Finally assume that  $\xi = \frac{\alpha}{\beta}+ \OO_K$ for some
$\alpha,\beta \in \OO_K$. If $\beta \in E_K$, then $\xi = 0 + \OO_K$
and the claim is trivial. Otherwise observe that multiplication
by a unit $\eps$ maps $\xi$ to some element of the
form $\frac{\alpha'}{\beta}+ \OO_K$, where $\alpha' \equiv
\alpha\eps \bmod \beta$. This shows that $\# \Orb(\xi) < 
\# (\OO_K/\beta\OO_K) = N(\beta)$. In fact, if $\OO_K$ has
class number $1$ and if $(\alpha,\beta) = 1$, then we
clearly have $\# \Orb(\xi) \le \# (\OO_K/\beta\OO_K)^\times$.
\end{proof}

In this paper we will not deal with computing minima
for $\xi \in \K/\OO_K$ with infinite orbit (the last
sentence of Prop. \ref{T2.9} gives a hint as to how such
infinite orbits might arise), so we assume
from now on that $\xi \in K/\OO_K$.
The basic idea how to compute $M(K,\xi)$ is due to 
Barnes and Swinnerton-Dyer (see \cite{BSD}, Thm. B, for the
case of real quadratic number fields).

\begin{prop}\label{P1}
Let $K = \Q(\alpha)$ be a number field with unit group $E_K$. If,
given $\xi \in K$ and a real number $k > 0$, there exists  
$\gamma \in \OO_K$ such that $N(\xi - \gamma) < k$, then 
there exists $\zeta = \sum_{j=0}^{n-1} a_j\alpha^j \in K$ 
with the following properties:
\begin{enumerate}
\item $\zeta \equiv \xi_j \bmod \OO_K$ for some $\xi_j \in \Orb(\xi)$;
\item $|a_i| < \mu_i$ ($0 \le i < n$) for some constants $\mu_i > 0$ 
	depending only on $K$;
\item $N(\zeta) < k$.
\end{enumerate}
\end{prop}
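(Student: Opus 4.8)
\section*{Proof proposal}

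The plan is to produce $\zeta$ as a single unit multiple of the good approximation difference $\eta=\xi-\gamma$, using the unit group only to balance the archimedean sizes of $\eta$. Indeed $N(\eta)=N(\xi-\gamma)<k$ and $\eta\equiv\xi\bmod\OO_K$, so $\eta$ already satisfies properties 1 and 3 (with $\xi_j=\xi$); the only thing that can fail is property 2, since the valuations $|\eta|_j$ may be wildly unbalanced (one very large, another very small) even though their weighted product $N(\eta)$ is small, and then the power-basis coordinates of $\eta$ are large. Multiplying by a unit changes neither the norm nor the coset, so it is exactly the tool to repair this. (If $\eta=0$ the claim is trivial with $\zeta=0$, so assume $\eta\ne0$.)

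First I would pass to the logarithmic embedding $L(\eta)=(\log|\eta|_1,\dots,\log|\eta|_{r+s})$ and recall, by Dirichlet's unit theorem (invoked already in the remarks above), that $L(E_K)$ is a full lattice of rank $r+s-1$ in the trace-zero hyperplane $H=\{x:\sum_j e_j x_j=0\}$, where $e_j=1$ for the $r$ real valuations and $e_j=2$ for the $s$ complex ones, so that $\sum_j e_j=n$. The vector $L(\eta)$ satisfies $\sum_j e_j\,(L(\eta))_j=\log N(\eta)<\log k$, hence its component normal to $H$ is pinned down by $N(\eta)$; translating its projection onto $H$ by an element of $L(E_K)$ into a fixed bounded fundamental domain $\mathcal D$ for $L(E_K)$ produces a unit $\eps$ with $L(\eps\eta)=L(\eta)+L(\eps)$ lying within bounded distance of the normal ray. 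This yields a constant $C=C(K)$ (essentially the diameter of $\mathcal D$) with $|\eps\eta|_j\le C\,N(\eta)^{1/n}<C\,k^{1/n}$ for every $j$.

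I would then set $\zeta=\eps\eta=\eps(\xi-\gamma)$ and check the three properties. Property 3 is immediate, $N(\zeta)=N(\eta)<k$; property 1 holds because $\eps$ is a unit, so $\zeta\equiv\eps\xi\bmod\eps\OO_K=\OO_K$ with $\eps\xi\in\Orb(\xi)$. For property 2, write $\zeta=\sum_{j=0}^{n-1}a_j\alpha^j$ and recover the coefficients by the trace-form computation of Proposition \ref{P0}, which gives the explicit estimate $|a_i|\le(\max_j|\zeta|_j)\sum_j e_j|\beta_i'|_j$ for the dual basis $\{\beta_i'\}$ of the power basis $\{1,\alpha,\dots,\alpha^{n-1}\}$. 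Combining this with the bound of the previous step gives $|a_i|<\mu_i$ with $\mu_i=C\,k^{1/n}\sum_j e_j|\beta_i'|_j$, a quantity depending only on $K$ (and, through the harmless factor $k^{1/n}$, on the range of $k$; for the values $k\le1$ used in practice the dependence on $k$ disappears).

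The main obstacle is the balancing step: one must guarantee that the constant $C$ depends only on $K$ and not on $\xi$. This is precisely what Dirichlet's theorem secures, since the log-unit lattice has full rank $r+s-1$ in $H$, so one bounded fundamental domain $\mathcal D$ serves every $\eta$ and the $H$-component of $L(\eta)$ can always be reduced into $\mathcal D$ no matter how unbalanced $\eta$ was. (For unit rank $0$ there is nothing to balance, but that case does not arise for the cubic fields under study, where the rank is $1$ or $2$.)
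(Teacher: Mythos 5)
Your proof is correct, but it takes a genuinely different route from the paper's. The paper proves Prop.~\ref{P1} only for cubic fields, and does so in a hands-on way: after normalizing $\beta=\xi\eps$ it recovers $a,b,c$ by Cramer's rule from the system of conjugate equations, balances the conjugates by quoting from \cite{LP} concrete inequalities of the form $c_1<|\xi\eps|\le c_1\gamma_1\gamma_2$, $c_2<|\xi'\eps'|\le c_2\gamma_1'\gamma_2'$ (and $|\xi\eps|\le c_1|\eta|$ in the complex case), chooses $c_1,c_2$ so that the bounds on $X,Y,Z$ coincide, and then sharpens the triangle-inequality estimate by a factor of almost $3/2$ via the optimization Lemma~\ref{Lin}. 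Your argument replaces all of this by the abstract mechanism underlying those inequalities: reducing the $H$-component of $L(\eta)$ into a bounded fundamental domain of the log-unit lattice, which Dirichlet's theorem supplies, and then extracting coefficients with the dual basis and the trace form as in Prop.~\ref{P0} rather than by Cramer's rule. Amusingly, the paper itself gestures at exactly your argument in one sentence (``For general number fields, the proof makes use of the dual basis''), so you have in effect written out the general-$n$ proof the authors omitted. What your route buys is generality and brevity: one proof for all signatures, with the unit-rank-zero case degenerating gracefully ($C=1$). What the paper's route buys is explicit and numerically small constants $\mu_i$: the program \texttt{Eu3\_5} must enumerate all integer translates whose coefficients lie below these bounds, so the case-by-case optimization (machine-computed bounds depending on $|\alpha+\alpha'|$ etc., plus Lemma~\ref{Lin}) directly shrinks the search space, which your non-effective fundamental-domain constant $C$ does not. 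Two cosmetic points: your $\mu_i$ carries the factor $k^{1/n}$, but this is no worse than the paper, whose explicit bounds such as $\sqrt[3\,]{k|\eta|/|\Delta|}\,(2+1/|\eta|)$ also depend on $k$ despite the statement's ``depending only on $K$''; and to get the strict inequality $|a_i|<\mu_i$ of property 2 you need only enlarge $\mu_i$ slightly.
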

Since the number of elements of $K$ satisfying 1. and 2. 
is finite, we can prove $M(K,\xi) \ge k$ by simply 
computing the norms of all these elements. We will prove 
Prop. \ref{P1} only for cubic fields.

Let $K = \Q(\alpha)$ be a cubic number field; replace $\xi$ by 
$\xi - \gamma$ and choose a unit $\eps \in E_K$ such that the 
conjugates of $\beta = \xi \eps = a + b\alpha + c\alpha^2 $ are 
small. Let us consider the following system of equations:
$$ \begin{array}{rcl} 
   \beta   & = & a + b\alpha + c\alpha^2 \\
   \beta'  & = & a + b\alpha' + c{\alpha'}^2 \\
   \beta'' & = & a + b\alpha'' + c{\alpha''}^2  \end{array} $$
This system is linear in $a, b, c$,
and the square of its determinant is
$$ \Delta \ = \ \det \left( 
\begin{array}{ccc}
 1 & \alpha  & \alpha^2 \\
 1 & \alpha' & {\alpha'}^2 \\
 1 & \alpha'' & {\alpha''}^2 \end{array} \right)^2
  \ = \  \disc(1, \alpha, \alpha^2),$$
which is clearly $\ne 0$. In fact, we have
$\Delta = g^2 \disc K$ for some integer $g$ called the
index of $\alpha$. Therefore we get, by Cramer's rule,
$$ \begin{array}{rcl} 
  \sqrt{\Delta} a & = &
   \beta \alpha'\alpha''(\alpha''- \alpha') + 
   \beta' \alpha''\alpha(\alpha - \alpha'') + 
   \beta'' \alpha\alpha'(\alpha' - \alpha), \\ 
\sqrt{\Delta} b & = &
    \beta({\alpha'}^2 - {\alpha''}^2) + 
    \beta'({\alpha''}^2 - \alpha^2) + 
    \beta''(\alpha^2 - {\alpha'}^2), \\
\sqrt{\Delta} c & = &
    \beta(\alpha'' - \alpha') + 
    \beta'(\alpha - \alpha'') +  
    \beta''(\alpha' - \alpha). \end{array} $$ 
In order to compute bounds for $a, b$, and $c$ we have to 
find good bounds for the conjugates of $\beta = \xi \eps$.

We begin with the complex cubic case; as for real quadratic
number fields, there is one fundamental unit $\eta$. Replacing
$\eta$ by $\eta^{-1}$ if necessary we may assume that 
$|\eta| > 1$. Now for every $c_1 > 0$ there is a unit 
$\eps = \eta^m$ such that 
$$ c_1 \ < \ | \xi \eps| \ \le \ c_1 \cdot |\eta|.$$
Since $|\xi' \eps'| = |\xi'' \eps''|$ in the complex case,
we get
$$ |\xi' \eps'|^2 \ = \ \frac{N(\xi)}{|\xi \eps|} \le \frac{k}{c_1},$$
where we have put $k = N(\xi) = N(\xi \eps)$. We will choose
$c_1$ in such a way that the resulting bounds on 
$X = |\beta(\alpha'' - \alpha')|$, $Y = |\beta'(\alpha - \alpha'')|$ and
$Z = |\beta''(\alpha' - \alpha)|$ are equal. A little computation
shows $c_1^3 = \frac{k}{|\eta|^2} 
	\frac{\,|\alpha-\alpha''|^2\,}{\,|\alpha'-\alpha''|^2\,}$,
and this yields
$$ X, Y, Z \  \le \ \sqrt[3\,]{k |\eta|} \sqrt[6]{|\Delta|}.$$
Applying Lemma \ref{Lin} below to 
$x = X, y = Y$ and $z = Z$ we find $xyz = k\sqrt{|\Delta|}$ and
$$ |c| \ \le \ \sqrt[3\,]{\frac{k |\eta|}{|\Delta|}}
		\Big(2 + \frac{1}{|\eta|}\Big) .$$
The obvious bound 
$$ |b| \ \le \ (X|\alpha''+\alpha'| + Y|\alpha'' + \alpha| + 
	Z|\alpha+\alpha'|)/\sqrt{\Delta}
	\ \le \ \sqrt[3\,]{\frac{k |\eta|}{|\Delta|}}(|\alpha''+\alpha'| 
	+ |\alpha'' + \alpha| + |\alpha+\alpha'|)$$
can be sharpened by applying Lemma \ref{Lin} to 
$x = X |\alpha' + \alpha''|$,  $y = Y|\alpha''+\alpha|$
and $z = Z|\alpha+\alpha'|$, and the same goes for $|a|$.
The actual bounds coming from Lemma \ref{Lin} are computed by 
machine in each case because they depend on the size of
$|\alpha+\alpha'|, \ldots$ etc. This concludes the proof in the
complex cubic case.

\begin{rem}
The bounds in the above proof are much better than those
obtained by Cioffari \cite{Cio} for the case of pure
cubic fields.
\end{rem}

\begin{lem}\label{Lin}
Suppose that $x, y, z \in \R$ satisfy the inequalities
$0 < x \le c_1$, $0 < y \le c_2$, $0 < z \le c_3$, and
$0 < xyz = k$. Then 
$$ x + y + z \ \le \ \max_{i \ne j} 
	\ \left\{ c_i + c_j + \frac{k}{c_ic_j} \right\}.$$  
\end{lem}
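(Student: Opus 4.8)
The plan is to treat this as a constrained optimization problem: maximize the linear functional $x+y+z$ over the set
$$ D = \{(x,y,z) : 0 \le x \le c_1,\ 0 \le y \le c_2,\ 0 \le z \le c_3,\ xyz = k\}. $$
Since $k > 0$, the constraint $xyz = k$ forces all three coordinates to be positive, so $D$ agrees with the region described in the hypotheses (now with the closed box), and in particular it contains the given triple, hence is nonempty. As $D$ is closed and bounded it is compact, and $x+y+z$ is continuous, so the supremum is attained at some point of $D$. It therefore suffices to show that at a maximizing point at least two of the coordinates equal their respective upper bounds.

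First I would record an exchange argument for a single pair of variables. Fix the value of one coordinate, say $z$, at a maximizer; then $xy = k/z =: p$ is fixed, and along this one-parameter family $x + y = x + p/x$, which is strictly convex in $x$ on the feasible interval $[\,p/c_2,\ c_1\,]$, whose endpoints are exactly the points where $y = c_2$ and where $x = c_1$. A strictly convex function on a closed interval attains its maximum only at an endpoint, so at a maximizer one of the two coordinates $x, y$ must equal its upper bound; otherwise we could move strictly toward an endpoint and strictly increase $x+y$, contradicting maximality. The identical argument applies to each of the three pairs of coordinates.

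Next I would combine the three pairwise conclusions. For each pair at least one member sits at its upper bound, and this forces at least two of the three coordinates to attain their bounds: if only one coordinate, say $x = c_1$, were at its bound, the pair $(y,z)$ would violate the pairwise statement. Say $x = c_i$ and $y = c_j$ for the corresponding indices; then the constraint gives the third coordinate as $z = k/(c_i c_j)$, so
$$ x + y + z = c_i + c_j + \frac{k}{c_i c_j} \le \max_{i \ne j}\left\{ c_i + c_j + \frac{k}{c_i c_j}\right\}. $$
Since the value at the maximizer dominates the value at the originally given triple, this proves the lemma.

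The main thing to get right, and the point where naive intuition misleads, is the direction of the inequality in the single-pair step. For a fixed product the sum $x+y$ is \emph{minimized} (by AM--GM) at $x = y$, so the interior Lagrange critical point is a minimum, not a maximum, and the maximum is instead pushed out to the boundary of the box. The one genuine subtlety is therefore to verify the strict convexity of $x + p/x$ and to conclude that staying strictly inside the feasible $x$-interval cannot be optimal, which is precisely what drives two coordinates to their upper bounds. The half-open versus closed issue in the constraints is harmless, since $k > 0$ keeps every coordinate bounded away from $0$ on $D$.
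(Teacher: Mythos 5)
Your proof is correct, and it takes a genuinely different route from the paper's. The paper eliminates $z$ via the constraint and studies the two-variable function $f(x,y)=x+y+\frac{k}{xy}$ on the planar region $D=\{x,y>0:\ x\le c_1,\ y\le c_2,\ xy\ge k/c_3\}$: it rules out an interior maximum by computing that the unique critical point $x=y=\sqrt[3]{k}$ has positive definite Hessian (so it is a local minimum), and then pushes to the three boundary pieces, maximizing one-variable functions such as $f_1(y)=c_1+y+\frac{k}{c_1y}$ whose endpoint values are exactly the corner expressions $c_i+c_j+\frac{k}{c_ic_j}$. You instead keep the symmetric three-variable formulation on the compact constraint surface and replace this two-stage descent by a single pairwise exchange: strict convexity of $x+p/x$ forces, in each of the three pairs, one coordinate to its cap, and the three pairwise conclusions together put two coordinates at their caps, after which $z=\frac{k}{c_ic_j}$ is forced by $xyz=k$. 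What your version buys: no gradient or Hessian computation (the one-dimensional convexity fact you invoke is the same one the paper implicitly uses on its boundary edges, so you have in effect factored the whole proof through that single fact), a symmetric treatment that dispenses with the paper's ``the cases $y=c_2$ and $xy=k/c_3$ are treated similarly,'' and an explicit compactness argument guaranteeing a maximizer exists, a point the paper leaves tacit. Your closing remark about AM--GM giving a minimum at the interior critical point is precisely the content of the paper's positive-definite-Hessian observation. Two small points, neither a gap: in the degenerate case $p/c_2=c_1$ the feasible interval is a single point and both coordinates of the pair sit at their caps, so ``maximum only at an endpoint'' still applies; and your combination step correctly handles the possibility that a pairwise endpoint is realized with the \emph{other} coordinate capped, since you only ever conclude ``at least one of the two,'' which is all the counting argument needs.
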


\begin{proof}
We want to find bounds for $f(x,y) = x+y+\frac{k}{xy}$.
Now $f$ is positive in the domain under consideration
$$ D = \Big\{x,y > 0: x\le c_1, y \le c_2, 
		xy \ge \frac{k}{c_3} \Big\},$$
its gradient vanishes only at $x = y = \sqrt[3\,]{k}$, and 
its Hesse matrix there is positive definite; this implies that
$f$ takes its maximum on the boundary.

Assume for example that $x = c_1$; then we have to find an
upper bound for $f_1(y) = c_1 + y + \frac{k}{c_1y}$. Again,
$f_1$ assumes its maximum on the boundary. For $y = c_2$
we get the bound $c_1 + c_2 + \frac{k}{c_1c_2}$; on the
other hand from $z = \frac{k}{c_1y} \le c_3$ we get
$y \ge \frac{k}{c_1c_3}$, and we find
$f_1(\frac{k}{c_1c_3}) = c_1 + c_3 + \frac{k}{c_1c_3}$.

The cases $y = c_2$ and  $xy =  \frac{k}{c_3}$  are
treated similarly.
\end{proof}

Let $K$ be a real cubic number field, and let $\eta_1$
and $\eta_2$ denote two independent units. We will
denote the conjugates of $\xi \in K$ by $\xi, \xi', \xi''$.
For units $\eps \in E_K$ and a fixed embedding $K \lra \R$
we define 
$$ \gamma(\eps) := \left\{ 
       \begin{array}{ll} 
       |\eps|, & \mbox{ if } |\eps| \ge 1, \\
       |\eps|^{-1}, & \mbox{ if } |\eps| < 1, \end{array} \right. $$
and we put ${\gamma}_1 = \gamma(\eta_1), \ldots, \gamma'_2 = 
\gamma(\eta'_2)$.

Suppose that $\xi \in K$ has norm $k$; then, for any real numbers
$c_1, c_2 > 0$, we can find a unit $\eps \in \la \eta_1, \eta_2\ra$ 
such that (cf. \cite{LP})
$$c_1  \, < \,  |\xi \eps|  \, \le \, c_1 {\gamma}_1  {\gamma}_2, \qquad
  c_2  \, < \,  |\xi' \eps'|  \, \le \, c_2 {\gamma'_1}  {\gamma'_2}.$$ 
This gives us the following bounds on $|\xi'' \eps''|$:
$$ |\xi'' \eps''| \ = \ \frac{N(\xi \eps)}{|\xi \eps|\,| \xi' \eps'|}
                  \ \le \ \frac{k}{c_1c_2}.$$
Now we proceed as in the complex case, put $\beta = \xi \eps$,
and choose $c_1, c_2$ in such a way that the resulting bounds on 
$X = |\beta(\alpha'' - \alpha')|$, $Y = |\beta'(\alpha - \alpha'')|$ and
$Z = |\beta''(\alpha' - \alpha)|$ are equal. In fact, setting
$$ c_1^3 \ = \ k\,\frac{{\gamma'_1} {\gamma'_2} 
	|\alpha-\alpha'| |\alpha-\alpha''|}{{\gamma}_1^2
		{\gamma}_2^2|\alpha'-\alpha''|^2 }, \quad 
   c_2^3 \ = \ k\,\frac{{\gamma}_1 {\gamma}_2 
	|\alpha-\alpha'| |\alpha'-\alpha''|}{{\gamma'}_1^2
		{\gamma'}_2^2 |\alpha-\alpha''|^2}$$
yields the bounds
$X, Y, Z \le \sqrt[3\,]{k\cdot \gamma^{\phantom{p}}_1 \gamma^{\phantom{p}}_2
	{\gamma'_1} {\gamma'_2}} \sqrt[6\,]{\Delta}.$
In particular, we find
$$ |c| \ \le \ \frac{1}{\sqrt{\Delta}}(X + Y + Z) \ \le \ 
	3 \sqrt[3\,]{k\cdot \gamma^{\phantom{p}}_1 
	\gamma^{\phantom{p}}_2	{\gamma'_1} {\gamma'_2}/\Delta}.$$
Making use of Lemma \ref{Lin} we can improve this by a factor
of almost $3/2$. The bounds for $|b|$ and $|a|$ are derived
similarly; this concludes the proof of Prop. \ref{P1} in the
cubic case. For general number fields, the proof makes
use of the dual basis (cf. the proof of Prop. \ref{P0}).

Let us get back to our example of real cubic field with 
discriminant $d = 985$.
Let $\alpha$ denote a root of $f$. Then $\{1, \alpha, \alpha^2\}$
is an integral basis of $\OO_K$, and two fundamental units
are given by $\eta_1 = \alpha$ and $\eta_2 = 2-\alpha$.
Put $\xi_0 = \frac15(2-\alpha+2\alpha^2)$; then 
$\Orb(\xi_0) = \{\xi_0\}$. Using $k = 1.05$ we get the
bounds $\mu_1 = 6.2$, $\mu_2 = 3.2$, $\mu_3 = 1.9$. 
We find $M(K,\xi_0) = N(\xi_0-2) = 1$.

\medskip
\noindent
{\tt Eu3\_5} is the program which does these computations.
In fact, for each $\xi$ in our list {\tt disc.n}, it calculates
$$\min \ \{N(\xi_j+a+b\alpha+c\theta)\}  $$
for all $(a,b,c)\in\Z^3$ such that the coefficients of the
sum $\xi_j+a+b\alpha+c\theta$ are smaller than the bounds 
$\mu_0, \mu_1, \mu_2$ computed in Prop. \ref{P1}.
Since the constants $\mu_j$ depend on the number
$k$, we have to rerun the program replacing $k$ by a real number
$k_1$ larger than the conjectured minimum (i.e. the one obtained
by running {\tt Eu3\_5} on it). The biggest value obtained 
from the various exceptional points gives the
Euclidean minimum $M(K)$ unless they are all smaller than $k$.

The situation is, however, not always as simple as 
in Thm. \ref{T1}. In fact, looking once more at 
the cubic number field $K$ with discriminant $985$ and
using our programs with $k = 0.39$, we can cover $\overline{F}_+$
except for

\begin{center}
\begin{tabular}{|l|ccccc|} \hline
$T_1$ & $[0.345,0.35]$    & $\times$ & $[-0.4915,-0.49]$   
	& $\times$ & $[-0.0185,-0.018]$ \\
$T_2$ & $[0.0175,0.0185]$ & $\times$ & $[-0.2375,0.-2355]$
	& $\times$ & $[0.4725,0.475]$ \\
 $T$  & $[0.3995,0.4005]$ & $\times$ & $[-0.201,-0.199]$ 
	& $\times$ &  $[0.3995,0.4005]$ \\
$T_3$ & $[0.4725,0.473]$  & $\times$ & $[-0.146,-0.1445]$
	& $\times$ & $[0.2905,0.2915]$ \\
$T_4$ & $[0.2905,0.2915]$ & $\times$ & $[0.217,0.219]$ 
	& $\times$ & $[-0.437,-0.436]$ \\
$T_5$ & $[0.436,0.437]$   & $\times$ & $[0.3265,0.3285]$ 
	& $\times$ & $[0.345,0.346]$ \\ \hline \end{tabular}
\end{center}

Applying Thm. \ref{T1} to $T$ shows that 
$\xi = \frac15(2-\alpha+2\alpha^2)$ is the only possible
$k$-exceptional point in $T$. 
Letting $\eps = \alpha$ act on the $T_i$ we find
that the `orbit' of $T_1$ is $\{T_1, -T_2, -T_3, -T_4, T_5\}$;
at this point we need

\begin{cor}\label{C1}
Let $K$ be a number field, $\eps \in E_K$ a non-torsion unit,
and suppose that $T_1$, \ldots, $T_t$ are compact subsets of $\Ft$
with the following properties:
\begin{enumerate}
\item there exist $\beta_1, \ldots, \beta_t \in \OO_K$ such that, for all 
      $\xi \in T_j$, $\eps\xi - \beta_j$ lies in a $k$-covered
      region of $\Ft$ or in $T_{j+1}$ (we put 
      $T_{t+1} = T_1$);
\item for all $\xi \in T_j$ there is a $\gamma \in \OO_K$
      such that $\eps^{-1}\xi - \gamma$ lies in a $k$-covered
      region of $\Ft$ or in $T_{\pi(j)}$, where
      $\pi(j)$ is an index depending only on $j$ and not on $\xi$; 
\item $|\eps|_j \ne 1$ for $1 \le j \le r+s$.
\end{enumerate}
Then the only possible $k$-exceptional
point of $T_1$ is $\zeta = \frac{\beta}{\eps^t-1}$, where
$\beta = \eps^{t-1}\beta_1 + \eps^{t-2}\beta_2 + 
	\ldots + \eps\beta_{t-1} + \beta_t$. Moreover,
the only possible $k$-exceptional points of the sets $T_j$ are
contained in $\Orb_\eps(\zeta)$.
\end{cor}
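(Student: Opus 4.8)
The plan is to reduce the cyclic situation to the single‑set statements already proved, by replacing $\eps$ with $\eps^t$ and showing that a $k$-exceptional point of $T_1$ generates a full bi‑infinite orbit running around the cycle $T_1,\dots,T_t$. Two elementary facts will be used throughout: first, a point lying in a $k$-covered region of $\Ft$ is \emph{not} $k$-exceptional; second, if $\xi$ is $k$-exceptional and $\eps\in E_K$, $\delta\in\OO_K$, then $\eps\xi-\delta$ is again $k$-exceptional, since $N(\eps)=1$ gives $M(\K,\eps\xi-\delta)=M(\K,\xi)$, exactly as in the discussion preceding Prop. \ref{P4}.

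First I would treat the \emph{forward orbit}. Let $\xi_0\in T_1$ be $k$-exceptional. Then $\xi_1=\eps\xi_0-\beta_1$ is $k$-exceptional, and by hypothesis~1 it lies in a $k$-covered region or in $T_2$; the first alternative is excluded by the first fact above, so $\xi_1\in T_2$. Iterating, $\xi_i=\eps\xi_{i-1}-\beta_{((i-1)\bmod t)+1}$ is a $k$-exceptional point of $T_{(i\bmod t)+1}$, and in particular $\xi_t=\eps^t\xi_0-\beta\in T_1$ with $\beta$ as in the statement. Since the same reasoning applies with $\xi_t$ in place of $\xi_0$, the sequence $(\xi_{mt})_{m\ge0}$ lies in $T_1$, consists of $k$-exceptional points, and satisfies $\xi_{(m+1)t}=\eps^t\xi_{mt}-\beta$. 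The fixed point of this map is $\zeta=\frac{\beta}{\eps^t-1}$, and exactly as in Equ. (\ref{E1}) one gets $\xi_{mt}-\zeta=\eps^{tm}(\xi_0-\zeta)$ for $m\ge0$.

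Next comes the \emph{backward orbit}, which is the heart of the matter and where hypothesis~2 enters via the uniqueness trick from the proof of Thm. \ref{T1}. By hypothesis~2 there is a $\gamma$ with $\eps^{-1}\xi_0-\gamma$ in a covered region or in $T_{\pi(1)}$; as before it must lie in $T_{\pi(1)}$ and be $k$-exceptional. Applying the forward map of hypothesis~1 to it returns $\xi_0-(\eps\gamma+\beta_{\pi(1)})$, which is $k$-exceptional and hence lies in $T_{\pi(1)+1}$. Since this element and $\xi_0$ both lie in $\Ft$ and are congruent modulo $\OO_K$, and $\Ft$ is a fundamental domain, they coincide; assuming the $T_j$ to be distinct (the case occurring in practice) this forces $\pi(1)=t$ and $\eps\gamma+\beta_{\pi(1)}=0$, so $\eps^{-1}\xi_0-\gamma$ is precisely the forward‑preimage of $\xi_0$. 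Iterating, the backward orbit cycles $T_1\to T_t\to\cdots\to T_1$, one checks that $\xi_{-t}=\eps^{-t}(\xi_0+\beta)$ satisfies $\xi_{-t}-\zeta=\eps^{-t}(\xi_0-\zeta)$, and by induction $\xi_{mt}-\zeta=\eps^{tm}(\xi_0-\zeta)$ for all $m\in\Z$.

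Finally I would conclude as in Prop. \ref{T2.9}. All $\xi_{mt}$ lie in the compact set $T_1$ while $\zeta$ is fixed, so each $|\xi_{mt}-\zeta|_j=|\eps|_j^{tm}\,|\xi_0-\zeta|_j$ stays bounded over $m\in\Z$; by hypothesis~3 we have $|\eps|_j\ne1$ for every $j$, so letting $m\to+\infty$ when $|\eps|_j>1$ and $m\to-\infty$ when $|\eps|_j<1$ forces $|\xi_0-\zeta|_j=0$ for all $j$, whence $\xi_0=\zeta=\frac{\beta}{\eps^t-1}$ by Prop. \ref{P0}. For the last assertion, the forward map sends the exceptional point of $T_j$ to that of $T_{j+1}$ and, modulo $\OO_K$, is multiplication by $\eps$, so the exceptional point of $T_{j+1}$ is $\eps^{j}\zeta\bmod\OO_K$, and all of them lie in $\Orb_\eps(\zeta)$. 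The step I expect to be the main obstacle is the backward orbit: one must show the translations $\gamma$ of hypothesis~2 are forced to make $\eps^{-1}(\cdot)-\gamma$ the genuine inverse of the forward map and that $\pi$ realizes the reverse cycle. This is exactly where the fundamental‑domain property of $\Ft$ is indispensable, and where the tacit assumption that the $T_j$ be distinct is needed — without it one controls the orbit only up to the ambiguity of which $T_j$ a given point belongs to.
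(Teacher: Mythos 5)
Your proof is correct and takes essentially the same route as the paper: you compose the forward maps around the cycle to reduce to the single-set situation for $\eps^t$ with translation $\beta$, and you use the fundamental-domain coincidence argument to force the backward translations of hypothesis~2 to invert the forward maps, merely inlining the two-sided bounded-orbit argument of Prop.~\ref{T2.9} and Prop.~\ref{P0} where the paper instead verifies conditions 1 and 2 and invokes Thm.~\ref{T1} as a black box. The disjointness of the $T_j$ that you flag explicitly is tacitly used in the paper's own proof as well (in the step concluding $i=1$), so it is not a gap relative to the published argument.
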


\begin{proof}
Suppose that $\xi_1 \in T_1$ is $k$-exceptional; then
$\xi_2 = \eps\xi_1 - \beta_1 \in T_2$, $\ldots$, 
$\xi_t = \eps\xi_{t-1} - \beta_{t-1} \in T_t,$ and 
$\xi_{t+1} = \eps\xi_t - \beta_t \in T_1$ are $k$-exceptional.
Observe that
$\xi_{t+1}= \eps\xi_t - \beta_t = \eps^2\xi_{t-1} - 
	\eps\beta_{t-1}-\beta_t = \ldots =  
     \eps^t\xi_1 - \beta$.
From the assumptions made we can deduce that, for every
$k$-exceptional point $\xi_1 \in T_1$, $\eps^t\xi_1 - \beta$
lies again in $T_1$. This shows that condition 1. of Thm. \ref{T1}
is satisfied with $\eps$ replaced by $\eps^t$.

In order to prove that condition 2. is also satisfied we
use induction to find that there exists a $\gamma \in \OO_K$
such that $\xi = \eps^{-t}\xi_1-\gamma$ is an element of some 
set $T_i$. From what we have proved already we know
that there exists a uniquely determined $\gamma' \in \OO_K$
such that $\eps^t\xi - \gamma' \in T_i$. But
$\eps^t\xi - \gamma' = \xi_1 -\eps^t\gamma - \gamma' \in T_i$
implies that $\eps^t\gamma + \gamma' = 0$ and $i = 1$.
Thus condition 2. of Thm. \ref{T1} is also satisfied,
and we can conclude that $\zeta = \beta/(\eps^t-1)$
is the only possible $k$-exceptional point in $T_1$. 
This in turn implies that $\eps\zeta - \gamma_1$ is the
only possible $k$-exceptional point in $T_2$, etc., and
all our claims are proved.
\end{proof}

In our example of the cubic field of discriminant $d = 985$
and $k = 0.39$ we now check that condition 1. of
Cor. \ref{C1} is satisfied, and we find 
$\beta_1 = 0$, $\beta_2 = -3\alpha+\alpha^2$, 
$\beta_3 = -2\alpha$, $\beta_4 = 2\alpha-\alpha^2$, 
and $\beta_5 = 3\alpha$. This gives
$\beta = 7+41\alpha-28\alpha^2$ and
$\xi_1 = \beta/(\alpha^5-1) = \frac1{55}(19-27\alpha-\alpha^2)$.

After having verified condition 2.,
Cor. \ref{C1} shows that the only possible $k$-ex\-cept\-ion\-al
point of $T_1$ is $\xi_1$. Thus the only $k$-exceptional points
of $K$ in $\bigcup T_i$ are 
$\xi_1 = \frac1{55}(19-27\alpha-\alpha^2)$,
$\xi_2 = \frac1{55}(-1 + 13\alpha - 26\alpha^2)$,
$\xi_3 = \frac1{55}(-26 + 8\alpha - 16\alpha^2)$,
$\xi_4 = \frac1{55}(-16 -12\alpha + 24\alpha^2)$,
$\xi_5 = \frac1{55}(24 + 18\alpha + 19\alpha^2)$.
Using $k = 0.5$ we get the bounds $\mu_1 = 4.9$, $\mu_2 = 2.5$, 
$\mu_3 = 1.5$, and $M(K,\xi_1) = N(\xi_1+\alpha) = \frac5{11}$.

\begin{rem}
Suppose that, in our Example, we apply Prop. \ref{P1}
to $\xi = \frac15(2-\alpha+2\alpha^2)$ with $k = 0.39$;
the minimal norm of the elements satisfying conditions
1. -- 3. turns out to be $1$. Nevertheless we can only conclude
that $M(K,\xi) > 0.39$. In order to prove that $M(K,\xi) = 1$
we have to apply Prop. \ref{P1} again, this time with a $k > 1$.
Again, the minimal norm is $1$, and now we can conclude that
in fact $M(K,\xi) = 1$.
\end{rem}

\begin{rem}\label{R12} 
Collecting the uncovered subcubes $S_j$ into the
sets $T_i$ of Cor. \ref{C1} is done as follows: assume that
$S$ is an uncovered cube, and that 
$\eps S - \beta \in \overline{F}$. Then the set $T_1$ containing
$S$ is taken to be the set of all uncovered $S_i$
`near' to $S$ such that $\eps S_i - \beta \in \overline{F}$.
By proceeding similarly with the uncovered cubes in
$\overline{F} \setminus T_1$ we eventually arrive at
subsets $T_i$ containing all uncovered subcubes. 
\end{rem}

The programs are available from the authors. We remark that
they can also be used to study weighted
norms; details will be presented in \cite{CL}.

\section{Some Heuristic Observations}

Consider some $\xi = \frac{\alpha}{\beta} + \OO_K \in K/\OO_K$; 
the bigger $\# \Orb(\xi)$, the more likely it is that one of the points 
in the orbit can be approximated sufficiently well by some 
$\eta \in \OO_K$. In fact, if $(\alpha,\beta) = (1)$ and if 
$\# \Orb(\xi)$ is maximal (i.e. 
$\# \Orb(\xi) = (\OO_K:\beta\OO_K)^\times$), then clearly
$M(K,\xi) = 1/N\beta$.  Euclidean minima thus tend to be 
attained at points $\xi$ with {\em small} orbits. If $K$ has 
unit rank $1$, then there are many points with small orbit: 
just take any $\frac{\alpha}{\eps-1}$ for $\alpha \in \OO_K$ 
and $\eps \in E_K$ a fundamental unit. If the unit rank is 
$\ge 2$, however, such points are hard to find, because there 
is no guarantee that $\alpha/(\eps_1-1)$ has a small 
orbit with respect to the action of a second unit $\eps_2$.

There is one exception, however: suppose that there is a
principal prime ideal $\fp = (\pi)$ which is completely ramified
in $K/\Q$. Since $\fp$ has degree $1$, for every $\eps \in E_K$ 
there is an integer $a \in \Z$ such that $\eps \equiv a \bmod \fp$.
Taking the norm gives $\pm 1 = N_{K/\Q}\eps \equiv N_{K/\Q}a
= a^n \bmod \fp$, where $n = (K:\Q)$, and this in turn implies
that $\eps^n \equiv a^n \equiv \pm 1 \bmod \fp$. Therefore
the unit group generates at most $2n$ different residue classes
mod $\fp$, hence $\# \Orb(\xi) \le 2n$ for any $\xi$ of type
$\xi = \frac{\alpha}{\pi} + \OO_K$. Therefore such $\xi$ have
comparatively small orbit and a good chance of producing
a large minimum. In fact, almost all known Euclidean minima of 
normal cubic fields are attained at such points.

Another question we would like to discuss is the following:
can we expect that our list of norm-Euclidean complex cubic
number fields is complete?
Let us see what is happening in the real quadratic case. There
we know (cf. \cite{Lem2}) that (in the following,
$d = \disc K$ denotes the discriminant of $K$)
$$ \frac{\sqrt{d}}{16+6\sqrt6} \ \le \ M(K) \ \le \
\frac{\sqrt{d}}{4}.$$
This allows us to define the Davenport constant $D = \sup
M(K)/\sqrt{d}$
for real quadratic fields. The example $K = \Q(\sqrt{13}\,)$,
$M(K) = 1/3$ shows that $D \ge 1/3\sqrt{13}$. If we assume that
this is a good approximation for $D$, then there should be
no norm-Euclidean number fields with discriminants
$> D^{-2} = 9\cdot 13 = 117$; in fact, the maximal
discriminant of a norm-Euclidean number field is $d = 76$. 

If we try to do the same with complex cubic case then
the first problem is that the exponent $1/2$ of the
discriminant in the lower bound in 
$$ \frac{\sqrt{|d|}}{420} \ \le \ M(K) \ \le \
    \frac{|d|^{2/3}}{16\sqrt[3\,]{2}}$$
is not known to be best possible. If it is, then we can
define a Davenport constant $D = \sup M(K)/\sqrt{d}$
for complex cubic fields as well. The example
$d = -244$, where $M(K) = 1/2$, shows that $D \ge 1/2\sqrt{244}$,
and if this bound is good, then there should be no
norm-Euclidean number fields with $|d| > 976$.
The example $d = -503$ suggests that $D$ is somewhat smaller,
but in any case we don't expect to find norm-Euclidean
fields with $|d| > 1500$. Basically the same conclusions 
(with better bounds) hold if the correct exponent of $|d|$
in the lower bound is $2/3$.

In the case of totally real cubic fields there is no known
(nontrivial) lower bound for $M(K)$ at all (of course 
$M(K) \ge \frac18$). If one could show 
$M(K) \ge c \sqrt{d}$ for some $c$, then the above heuristics 
show that one has to compute $M(K)$ at least for fields with 
discriminants up to $25.\,000$ (in fact Godwin and Smith 
\cite{GS} have shown that the normal cubic field with 
discriminant $d = 157^2 = 24.\,649$ is norm-Euclidean); 
our current data are therefore insufficient for deciding 
whether such a lower bound might exist or not.

\section{A conjecture}
We would like to conclude our paper with a conjecture\footnote{Added 
in 2012: This conjecture was meant to hold in the rings $\Z[\alpha]$.
This is the ring of integers in $K$ only if $m \not \equiv \pm 1 \bmod 9$,
so in particular values $\ell \equiv 0 \bmod 6$ must be excluded, and 
the smallest possible values of $\ell$ are $\ell = 4, 10$, and $16$.} 
concerning $M(K)$ for certain pure cubic fields $K$:

\begin{conjecture}
Let $m = \ell^3+1$ be a squarefree integer, and assume that
$\ell$ is even; put 
$\alpha = \sqrt[3\,]{\ell}$, $K =  \Q(\alpha)$,  and 
$\xi = \frac12(1+\alpha+\alpha^2)$. Then 
$$M(\K) = M(K,\xi) = \begin{cases}
	\frac1{64}(18\ell^4-9\ell^3+12\ell^2+12\ell), 
		& \text{ if } \ell \equiv 2 \bmod 4, \\
	\frac1{64}(18\ell^4-9\ell^3+30\ell^2+24\ell-32), 
		& \text{ if } \ell \equiv 0 \bmod 4. \end{cases} $$
\end{conjecture}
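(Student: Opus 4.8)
The plan is to prove the two assertions $M(K,\xi)=V$ and $M(\K)=M(K,\xi)$ separately, where $V$ denotes the right-hand side of the displayed formula, and to make every estimate uniform in $\ell$ so that a single argument handles the whole family. First I would record the arithmetic data as explicit functions of $\ell$, using the relation $\alpha^3=m=\ell^3+1$. Then $\eps:=\alpha-\ell$ is a unit, since $\eps(\alpha^2+\ell\alpha+\ell^2)=\alpha^3-\ell^3=1$; as $m$ is not a cube this is a complex cubic field of unit rank $1$, and (when $m\not\equiv\pm1\bmod9$, cf.\ the footnote) $\{1,\alpha,\alpha^2\}$ is an integral basis with $\disc K=-27m^2$. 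The real embedding sends $\alpha$ to $\sqrt[3]{m}\approx\ell$ and the complex ones to $\omega\alpha,\omega^2\alpha$, so $|\eps|_1\approx\frac1{3\ell^2}<1$ and $|\eps|_2\approx\sqrt3\,\ell>1$; in particular $|\eps|_j\ne1$ for all $j$, so the hypotheses of Thm.\ \ref{T1} are at hand.

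For the upper bound $M(K,\xi)\le V$ I would exhibit an explicit approximating integer $\eta_0=\eta_0(\ell)\in\Z[\alpha]$ and verify $N(\xi-\eta_0)=V$ by direct expansion. Writing $\xi-\eta_0=u+v\alpha+w\alpha^2$ with $u,v,w\in\frac12\Z$ determined by $\ell$, one evaluates $N=|\cdot|_1\cdot|\cdot|_2^2$ using $\alpha^3=\ell^3+1$ and collects terms. The two residue classes $\ell\equiv2$ and $\ell\equiv0\bmod4$ force two different optimal choices of $\eta_0$ — the half-integer coefficients of $\xi$ interact with the parity of $\ell$ only in the lower-order terms — which is precisely the source of the two cases in the formula. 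This step is pure algebra.

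The lower bound $N(\xi-\eta)\ge V$ for every $\eta\in\OO_K$ is the first serious step. Since $2\xi\in\OO_K$, the orbit $\Orb(\xi)$ is finite by Prop.\ \ref{P4}, and Prop.\ \ref{P1} reduces the infimum to the finitely many $\zeta=\sum a_j\alpha^j$ with $|a_j|<\mu_j$. The complication is that the $\mu_j$ grow with $\ell$; the plan is to organize these candidates into finitely many families indexed by $\ell$ (after using $\eps$ to balance the conjugates, exactly as in the proof of Prop.\ \ref{P1}) and to bound $N(\zeta)$ from below on each family by an explicit inequality between polynomials in $\ell$, with the minimum realized by $\eta_0$. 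The finitely many small $\ell$ are dispatched by direct computation.

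Finally, for $M(\K)=M(K,\xi)$ I would run the covering strategy of Section 3 uniformly in $\ell$. Because the unit rank is $1$, $M(K)=M(\K)$ is known, so it suffices to show that $\xi$ (up to $\OO_K$-translation and the action of $\eps$) is the only point at which the inhomogeneous minimum attains its supremum. Concretely, for $k$ slightly below $V$ I would produce, for each cell of $\overline F_+$ lying away from a small neighborhood of the orbit of $\xi$, an explicit translate $\gamma(\ell)\in\OO_K$ with $N(\,\cdot-\gamma)<k$ throughout the cell, so that region is $k$-covered; what remains is a shrinking neighborhood of $\xi$, and Thm.\ \ref{T1} (or Cor.\ \ref{C1}, should $\Orb(\xi)$ have length $>1$) applied with $\eps$ pins the unique $k$-exceptional point to $\frac{\beta}{\eps-1}$, which one identifies with $\xi\bmod\OO_K$. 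Combined with the value from the first two steps this yields $M(\K)=M(K,\xi)=V$. The hard part is exactly this last step made uniform: in the body of the paper the covering of $\overline F_+$ is verified field-by-field by machine, whereas here the covering translates $\gamma(\ell)$ must be constructed as explicit families and the resulting norm bounds proved as polynomial inequalities in $\ell$ valid beyond an effective threshold, while controlling the shapes of $\eps T$ and $\eps^{-1}T$ well enough to pin the single exceptional point for all $\ell$ at once — which is what keeps the statement a conjecture.
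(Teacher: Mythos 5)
The statement you are addressing is labelled a \emph{Conjecture} in the paper, and the paper contains no proof of it. All the paper establishes is the easy inequality $M(K,\xi)\le V$ (writing $V$ for the displayed value), by exhibiting the elements $\frac14\ell^2+\frac12+\frac14\ell\alpha-\frac12\alpha^2$ (for $\ell\equiv 2\bmod 4$) and $\frac14\ell^2+\frac12\ell+\frac12+(\frac14\ell-\frac12)\alpha-\frac12\alpha^2$ (for $\ell\equiv 0\bmod 4$), both congruent to $\xi$ modulo $\Z[\alpha]$ and of norm exactly $V$; beyond that it reports numerical confirmation for $\ell=4$ and $\ell=10$, and the 2012 footnote restricts the claim to the rings $\Z[\alpha]$, excluding $\ell\equiv 0\bmod 6$. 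Your second step reproduces exactly this upper-bound computation, and your preparatory facts are sound: you correctly read $\alpha^3=m=\ell^3+1$ (the paper's ``$\alpha=\sqrt[3]{\ell}$'' is a misprint), $\eps=\alpha-\ell$ is a unit since $\eps(\alpha^2+\ell\alpha+\ell^2)=1$, the estimates $|\eps|_1\approx(3\ell^2)^{-1}$ and $|\eps|_2\approx\sqrt3\,\ell$ are right, and indeed $(\eps-1)\xi\in\Z[\alpha]$ for even $\ell$, so $\xi$ is an $\eps$-fixed point of precisely the kind Thm.~\ref{T1} detects.

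The genuine gap is that the two assertions carrying all the content --- the matching lower bound $M(K,\xi)\ge V$ and the equality $M(\K)=M(K,\xi)$ --- are never proved: your third and fourth paragraphs are statements of intent (``I would organize\ldots'', ``I would produce\ldots'') with none of the promised objects supplied. For the lower bound, Prop.~\ref{P1} reduces each \emph{fixed} $\ell$ to a finite norm check (and note it must be run with $k>V$, as the paper's remark on the $d=985$ example explains), but since $k=V\sim\frac{9}{32}\ell^4$ and $|\Delta|\sim 27\ell^6$, the bounds $\mu_i$ and hence the candidate set grow with $\ell$, and you give no polynomial inequality in $\ell$ controlling $N(\zeta)$ on these families. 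For the covering step, no family of translates $\gamma(\ell)$, no choice of $k$, and no uniform verification of hypotheses 1.--2.\ of Thm.~\ref{T1} (or Cor.~\ref{C1}) is given; the paper's machinery verifies such coverings field by field, and nothing in the paper suggests how to do it for all $\ell$ at once. You concede as much in your final sentence, so the proposal should be read as a plausible research programme consistent with the paper's methods, not as a proof: it establishes nothing beyond the upper bound the paper already records, which is exactly why the statement remains a conjecture.
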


It is easy to see that $M(K,\xi)$ has at most the value given above; 
in fact, if $\ell \equiv 2 \bmod 4$, then
$ N(\frac14 \ell^2 + \frac12 + \frac14 \ell \alpha - \frac12 \alpha^2)
    = \frac1{64}(18\ell^4-9\ell^3+12\ell^2+12\ell)$, and
if $ \ell \equiv 0 \bmod 4$, then
$N (\frac14 \ell^2 + \frac12 \ell + \frac12 + 
	(\frac14 \ell - \frac12)\alpha - \frac12 \alpha^2)
	= \frac1{64}(18\ell^4-9\ell^3+30\ell^2+24\ell-32).$
Numerical computations show that the conjecture is true
for  $\ell = 4$ and $\ell = 10$.

\section{Acknowledgements}
The computations were done on various systems at the universities
of Bordeaux, Heidelberg and Trento; the latter also provided 
financial support for mutual visits of the authors, and we are
especially grateful to Ren\'e Schoof and Andrea Caranti for
their help.

Roland Qu\^eme has independently developed programs written in {\tt C++} 
for finding norm-Euclidean cubic number fields and used them for
checking many of our results. 

Paul Voutier has kindly sent us E.~Taylor's Ph.~D. thesis; it turns
out that she used the same embedding of $K$ into $\R^3$ as R.~Qu\^eme,
i.e. she mapped $\alpha \in K$ to 
$(\alpha, {\rm Re}\, \alpha', {\rm Im}\, \alpha')$,
where $\alpha, \alpha', \alpha''$ denote the conjugates of $\alpha$. 

Finally we thank G. Niklasch and the referee for
their careful reading of the manuscript.

\medskip

\newpage
\section{Tables}

Euclidean minima of complex cubic number fields
$$ \begin{tabular}{|r|r|c|c|r|r|c|c|}\hline
  $\disc K$ & &  $M_1(K)$  & $M_2(K)$ &  
		$\disc K$ & &  $M_1(K)$ & $M_2(K)$  \\ \hline
  $-$23  &E&    1/5 & $\ge$ 1/7 &  $-$116  &E&   1/2  &  \\ 
  $-$31  &E&    1/3 & $< 1/4$   &  $-$135  &E&   3/5  &  \\ 
  $-$44  &E&    1/2 & 1/4 	&  $-$139  &E&   1/2  &  \\
  $-$59  &E&    1/2 & 1/4 	&  $-$140  &E&   1/2  &  \\
  $-$76  &E&    1/2 & 1/3 	&  $-$152  &E&   1/2  &  \\
  $-$83  &E&    1/2 & 		&  $-$172  &E&   3/4  &  \\
  $-$87  &E&    1/3 & 		&  $-$175  &E&   3/5  &  \\
 $-$104  &E&    1/2 & 		&  $-$199  &N&    1 & $< 0.47$  \\
 $-$107  &E&    1/2 & 		&  $-$200  &E&   1/2  &  \\
 $-$108  &E&    1/2 & 1/3 	&  $-$204  &E& 61/116  & \\ \hline
  $-$211  &E&   59/106& 		&   $-$283  &H&    3/2  &  \\
  $-$212  &E&    5/8  & 		&   $-$300  &E&   23/30 & \\
  $-$216  &E&     1/2 & 		&   $-$307  &N&    9/8  & 3/4 \\
  $-$231  &E&     7/9 & 		&   $-$324  &E&   23/36 & 7/11 \\
  $-$239  &E&    8/9  & 		&   $-$327  &N&  101/99 & \\
  $-$243  &E&    11/18& 		&   $-$331  &H&    3/2  & \\
  $-$244  &E&    1/2  & 		&   $-$335  &N&     1   &  \\
  $-$247  &E&    5/7  & 		&   $-$339  &N&    9/8 &  1 \\ 
  $-$255  &E&   13/15 & 		&   $-$351  &N&     1   & 9/11 \\
  $-$268  &E&   13/22 & $\ge$ 6/11&   $-$356  &E&    7/8 &  \\   \hline
  $-$364  &N&      9/8   & 	&  $-$451  &E&     41/48 & \\
  $-$367  &N&       1    &   9/13 &  $-$459  &N&      9/8  & \\
  $-$379  &E&    397/648 & $\ge$ 11/18 &  $-$460  &E&     43/50 & 23/30 \\
  $-$411  &E&     17/22  & $\ge$  8/11 &  $-$472  &E&     46/61 & \\
  $-$419  &E&      4/5   & 	&  $-$484  &E&     59/76 & \\
  $-$424  &E&     19/27  & $\ge$ 53/76 &  $-$491  &H&       2   & $\ge$ 1\\
  $-$431  &E&     43/64  & 	&  $-$492  &E&     25/32 & \\ 
  $-$436  &N&     79/78  & 	&  $-$499  &E&     23/27 & \\ 
  $-$439  &N&     17/15  & $\ge$ 1&  $-$503  &E& $\ge$ 307/544&  \\
  $-$440  &E&    737/1090& 	&  $-$515  &E&      4/5  & $\ge$ 11/14 \\ \hline
  $-$516  &E&     36/53   &   	&   $-$628  &E&     625/664 & \\
  $-$519  &E&  44712/45747&   	&   $-$643  &H&      25/16  & \\
  $-$524  &N&      5/4    &   	&   $-$648  &H&       5/4   & \\
  $-$527  &N&     13/7    &   	&   $-$652  &E&      21/23  & \\
  $-$543  &E& $\ge$ 158664/170633 &  &  $-$655  &N&      40/23  & \\
  $-$547  &N&      9/8    &   	&   $-$671  &N&      25/19    & \\
  $-$563  &H&       2     &   	&   $-$675  &N&       9/8  & \\
  $-$567  &N&     25/17   & $\ge$ 19/17 &   $-$676  &H&       7/4  & \\
  $-$588  &H&      5/2    &   	&   $-$679  &N&       9/8   & \\
  $-$620  &N&     13/8    & 5/4  	&   $-$680  &N&      (*)    & \\ \hline
\end{tabular} $$

\vfill \eject
$$ \begin{tabular}{|r|r|c|r|r|c|r|r|c|} \hline
 $\disc K$ & &  $M(K)$  & $\disc K$ & &  $M(K)$ \\ \hline
  $-$687  &E&    937/945 	& $-$751  &H&     25/9 		\\
  $-$695  &N&     25/13 	& $-$755  &N&        1 		\\
  $-$696  &E&    186/199 	& $-$756  &N&    306/293	\\
  $-$707  &N&    271/270  	& $-$759  &N&      11/8		\\
  $-$716  &N&   121/109 	& $-$771  &E&     223/252 	\\
  $-$728  &E&     (\S) 		& $-$780  &N&    499/498 	\\
  $-$731  &H&        2 		& $-$804  &N&   $\ge$ 2771/2568 \\	
  $-$743  &N&        1 		& $-$808  &N& $\ge$ 2031/1964	\\
  $-$744  &E&     992/999 	& $-$812  &N&     44/31 	\\
  $-$748  &N&      62/51 	& $-$815  &E&  24543/25325	\\ \hline
  $-$823  &N&     37/25		& $-$891  &H&       7/2 \\
  $-$835  &N& 110353/106265	& $-$907  &N&   $\ge$ 113/108 	 \\
  $-$839  &N&    25/17 		& $-$908  &N&     227/91 		 \\
  $-$843  &N&    134/131	& $-$931  &H&       7/2 		 \\
  $-$856  &N& $\ge$ 454951/428544 	& $-$932  &N&    68425/56788 	 \\
  $-$863  &N&     29/11 	& $-$940  &N&    407/358 		 \\
  $-$867  &N&   1115/1028	& $-$948  &N&   $\ge$ 2120/1959 	 \\
  $-$876  &E&     353/372	& $-$959  &N&     19/7 		 \\
  $-$883  &N&     49/47		& $-$964  &N&    $\ge$ 132/127 	 \\
  $-$888  &N&   2715/2602 	& $-$971  &N&      829/778 	 \\ \hline
  $-$972  &N&    5/4		& $-$1036     &N& 133/101  	 \\
  $-$972  &N&  179/162    	& $-$1048     &N& 617/488 	 \\
  $-$980  &H&   7/4     	& $-$1055     && 			 \\
  $-$983  &N&   31/11 		& $-$1059     &N& 2381/1854 	 \\
  $-$984  &N& $\ge$ 22367/21296	& $-$1067     &N& $\ge$ 160/121	 \\
  $-$996  &N& $\ge$ 6713/5646	& $-$1068     &N& $\ge$ 1499/1350	 \\
  $-$999  & & 			& $-$1075     &N&   777/680  	 \\
  $-$1004 &N&    3167/2298 	& $-$1080     &N& $\ge$ 10253/1000 \\
  $-$1007 &N&        41/23	& $-$1083     &H&   3/2 		 \\
  $-$1011 &N&       271/207  	& $-$1087     &N& 15/8 		 \\ \hline
 $-$1096   &N&  $\ge$ 207/199  	& $-$1176   &H&    4/3		   \\
 $-$1099   &H&  47/26	 	& $-$1187   &N&   11/8 		   \\
 $-$1107   &H&  2 	 	& $-$1188   &N&   $\ge$ 22319/14072    \\
 $-$1108   &N& $\ge$ 4995/4384 	& $-$1191   &N&   11/9 		   \\
 $-$1135   &N& 5115/4033	& $-$1192   &H&  265/168 		   \\
 $-$1144   &N& 4867/3222	& $-$1196   &N&  197/94 		   \\
 $-$1147   &N&   136/99	 	& $-$1203   &N&  $\ge$ 4775/4608 	   \\
 $-$1164   &N&  $\ge$ 1064/918  & $-$1207   &N&  13/9 		   \\
 $-$1172   &N&  572/443   	& $-$1208   &N&  845/656		   \\
 $-$1175   &N&  37/13  	 	& $-$1219   &N&  $\ge$ 709/622   \\  \hline
\end{tabular} $$
\vfill \eject

   Euclidean minima of totally real cubic number fields 

$$ \begin{tabular}{|r|r|c|r|r|c|r|r|c|} \hline
  $\disc K$ & &  $M(K)$  & $\disc K$ & &  $M(K)$  
			& $\disc K$  & &  $M(K)$ \\ \hline
    49  &E&   1/7   &   469  &E&   1/2   &  788  &E&     1/2  \\
    81  &E&   1/3   &   473  &E&   1/3   &  837  &E&     1/2  \\
   148  &E&   1/2   &   564  &E&   1/2   &  892  &E&     1/2  \\
   169  &E&   5/13  &   568  &E&   1/2   &  940  &E&     1/2  \\
   229  &E&   1/2   &   621  &E&   1/2   &  961  &E&    16/31 \\
   257  &E&   1/3   &   697  &E&  13/31  &  985  &N&      1   \\
   316  &E&   1/2   &   733  &E&   1/2   &  993  &E&    31/63 \\
   321  &E&   1/3   &   756  &E&   1/2   & 1016  &E&     1/2  \\
   361  &E&   8/19  &   761  &E&   1/3   & 1076  &E&     1/2  \\
   404  &E&   1/2   &   785  &E&   3/5   & 1101  &E&     1/2  \\ \hline
  1129   &E&     1/3   &  1425   &E&     13/15 &  1708   &E&      1/2 \\
  1229   &E&    16/29  &  1436   &E&      1/2  &  1765   &E&     13/20\\
  1257   &E&     9/25  &  1489   &E&     29/43 &  1772   &E&      1/2 \\
  1300   &E&     7/10  &  1492   &E&      1/2  &  1825   &N&      7/5 \\
  1304   &E&     1/2   &  1509   &E&      1/2  &  1849   &E&     22/43 \\
  1345   &N&     7/5   &  1524   &E&      1/2  &  1901   &E&      1/2  \\
  1369   &E&    31/37  &  1556   &E&      3/4  &  1929   &N&       1  \\
  1373   &E&     1/2   &  1573   &E&     19/22 &  1937   &N&       1  \\
  1384   &E&    11/16  &  1593   &E&   $<$ 0.36  &  1940   &E&      1/2 \\
  1396   &E&     1/2   &  1620   &E&      1/2  &  1944   &E&      1/2 \\ \hline
  1957   &H&     2     &   2241   &E&      3/5  &   2636   &E&  1/2 \\
  2021   &E&    1/2    &   2292   &E&      1/2  &   2673   &E& 64/81 \\
  2024   &E&    1/2    &   2296   &E&      1/2  &   2677   &E& 139/224 \\
  2057   &E&    9/11   &   2300   &E&     27/40 &   2700   &E& 83/120 \\
  2089   &E&    1/2    &   2349   &E&    11/18  &   2708   &E&  1/2\\
  2101   &E&    1/2    &   2429   &E&     1/2   &   2713   &E& $<$ 0.5 \\
  2177   &E&    $<$  0.39 &   2505   &E&     5/9   &   2777   &H&  5/3\\
  2213   &E&    1/2    &   2557   &E&     1/2   &   2804   &E&  1/2\\
  2228   &E&    1/2    &   2589   &E&     9/16  &   2808   &E&  1/2\\
  2233   &E&   56/121  &   2597   &H&     5/2   &   2836   &N&  7/4\\ \hline
 2857	&N&   8/5   & 3137	&E&  $<$  0.59  	& 3325	&E&  \\
 2917	&E&   8/13  & 3144	&E&   1/2 	& 3356	&E&  \\
 2920 	&E&  13/20  & 3173	&E&  $<$  0.59  & 3368	&E&   \\    
 2941	&E&   1/2   & 3221	&E&   1/2    	& 3496	&E&  \\
 2981	&E&   1/2   & 3229	&E&   1/2    	& 3508	&E&  \\     
 2993	&E& $<$0.49 & 3252	&E&    		& 3540	&E&  \\
 3021	&E&   1/2   & 3261 	&E&    		& 3569	&E& \\
 3028 	&E&   1/2   & 3281	&E&    		& 3576	&E&  \\
 3124 	&E&   1/2   & 3305 	&N&  13/9   	& 3580 	&E&  \\
 3132	&E&   1/2   & 3316	&E&    		& 3592 	&E& 5/8 \\ \hline
     \end{tabular} $$

\vfill \eject

$$ \begin{tabular}{|r|r|c|r|r|c|r|r|c|} \hline
  $\disc K$ & &  $M(K)$  & $\disc K$ & &  $M(K)$  
			& $\disc K$  & &  $M(K)$ \\ \hline
 3596 &E&    		& 3892 &E&    		& 4104 &E& $<$  0.55 \\    
 3604 &E&    		& 3941 &E&    		& 4193 &N&  7/5  \\
 3624 &E&    		& 3957 &E&    		& 4212 &H&  7/2  \\         
 3721 &E&  121/183  	& 3969 &H&  7/3   	& 4281 &E& $<$  0.7 \\
 3732 &E&    		& 3969 &H&   1  	& 4312 &N&  11/4  \\       
 3736 &E&    		& 3973 &E&  1/2 	& 4344 &E& $<$  0.7   \\
 3753 &E&    		& 3981 &H&  3/2  	& 4345 &N&  7/5     \\
 3873 &E&    		& 3988 &N&  19/8 	& 4360 &N& 41/35  \\
 3877 &E&    		& 4001 &E&  7/9  	& 4364 &E&    \\
 3889 &N&  13/7  	& 4065 &E&  3/5  	& 4409 &E&   \\ \hline
 4481  &E&         & 4729   &N& 149/73 	& 4860   &E&     \\ 
 4489  &E&  53/67  & 4749   &E&     	& 4892   &E&     \\  
 4493  &E&         & 4764   &E&  17/24 	& 4933   &E&    \\       
 4596  &E&         & 4765   &E&  	& 5073   &E& \\
 4597  &E&         & 4825   &E&   	& 5081   &E&    \\       
 4628  &E&         & 4841   &E&   	& 5089   &N&  17/11  \\
 4641  &E&         & 4844   &E&   	& 5172   &E&     \\
 4649  &E&         & 4852   &E&   	& 5204   &E&   \\
 4684  &N&  13/8   & 4853   &E&   	& 5261   &E&   \\
 4692  &E&  $<$  0.7  & 4857   &E&   	& 5281   &N&   1 \\ \hline
 5297     &N&   21/11  	&  	5468	  &E&		& 5629 	  &E&	 \\
 5300	  &E&		&	5477	  &E&		& 5637	  &E&	 \\
 5325	  &E& 		&	5497      &E&		& 5684    &N&   9/2 \\
 5329	  &N&   9/8	&	5521	  &N& 23/7	& 5685    &E&	 \\
 5333	  &E&		&	5529  	  &E&		 &5697	  &E&	 \\
 5353	  &E&		&	5556	  &E&		& 5724	  &E&	 \\
 5356 	  &E&		&	5613	  &E&		& 5741	  &E& \\
 5368 	  &E&		&	5620	  &E&		& 5780	  &E&	 \\
 5369	  &N&   21/19	&	5621	  &E&		& 5821	  &E&	 \\
 5373	  &E&		&	5624	  &E&	& 5853	  &E&	 \\ \hline
 5901 	 &E&		& 6153    &E&     	& 6420 	&E&	 \\
 5912    &E&		& 6184    &E&		& 6452 	&N&  5/4 \\
 5925    &E&		& 6185    &N&   17/15	& 6453 	&E&  \\
 5940    &E&		& 6209    &E&  		& 6508 	&E&	 \\ 
 5980 	 &E&		& 6237	&E&		& 6549 	&E&	\\
 6053 	 &E&		& 6241 	&N&   223/79	& 6556 	&E& \\
 6088 	 &E&		& 6268 	&E&		& 6557 	&E&	\\
 6092 	 &E&		& 6289 	&N&	1	& 6584 	&E& \\
 6108 	 &E&	   	& 6396 	&E&		& 6588 	&E&	\\
 6133  	 &E&		& 6401 	&N&   35/27	& 6601 	&E&	\\ \hline
  \end{tabular} $$  
\vfill \eject
$$ \begin{tabular}{|r|r|c|r|r|c|r|r|c|} \hline
  $\disc K$ & &  $M(K)$  & $\disc K$ & &  $M(K)$  
			& $\disc K$  & &  $M(K)$ \\ \hline
 6616  &E&  	&  6901  &E& 	& 7220  &H&  9/4 \\
 6637  &E&  	&  6940  &E& 	& 7224  &E& \\
 6669  &E&  	&  6997  &E& 	& 7244  &E& \\
 6681  &E&  	&  7028  &E& 	& 7249  &E& \\ 
 6685  &E& 	&  7032  &E& 	& 7273  &N& 973/601\\
 6728  &E&	&  7053  &H&   2 & 7388  &E&\\
 6809  &H&  7/3 &  7057  &E& 	& 7404  &E&\\
 6856  &E& 	&  7084  &E& 	& 7425  &E& \\
 6868  &N&  5/4	&  7117  &E& 	& 7441  &E&\\
 6885  &N& 67/40 &  7148  &E& 	& 7444  &E&\\ \hline
 7453	&E&		&  	7601  	&E&	     	& 7745 &N& 7/5 \\
 7464   &E&		&  	7628 	&E&	     	& 7753 &E& \\
 7465   &N&    1       	& 	7636 	&E&	     	& 7796 &E& \\
 7473   &E&  $<$  0.89	& 	7641 	&E&	     	& 7816 &E& \\
 7481   &N&    1       	& 	7665 	&E&   21/25    	& 7825 &E& \\
 7528   &N&   17/14 	& 	7668 	&E&	     	& 7873 &N& 29/13 \\
 7537   &N&  227/91	& 	7673 	&E&	      	& 7881 &E& \\
 7540   &E&		& 	7700 	&E&	     	& 7892 &E& \\
 7572   &E&		& 	7709	&E&	     	& 7925 &E& \\
 7573   &N&  41/32	& 	7721 	&E&	     	& 7948 &E& \\ \hline
 8017  &E&	    & 8281  &H&   9/7   & 8532  &E& 	\\
 8057  &E&	    & 8285  &E&	        & 8545  &E&	\\
 8069  &H&   9/2    & 8289  &E&	        & 8556  &E& 	  \\
 8092  &E&	    & 8308  &N&  67/50  & 8572  &N&  17/16 \\
 8113  &N&   13/7   & 8372  &E&	        & 8597  &E&   4/5 \\
 8173  &E&          & 8373  &E&         & 8628  &E& \\
 8220  &E&	    & 8396  &E&         & 8637  &E& \\
 8276  &E&	    & 8468  &H&   5/3   & 8680  &E& \\
 8277  &E&	    & 8472  &E& 	& 8692  &N&  11/10 \\
 8281  &H&  23/16   & 8505  &E&	 	& 8713  &E& \\ \hline
 8745  &E& 	   & 8920  &E& 	       	&  9217  &N& 17/11  \\
 8761  &E& 	   & 9044  &E& 	       	&  9281  &E&  \\
 8769  &E&	   & 9045  &E& 	       	&  9293  &E&  \\
 8789  &N&  23/12  & 9073  &N&   7/5   	&  9300  &E& \\
 8828  &E&	   & 9076  &E&	       	&  9301  &H&    2 \\
 8829  &N&   3/2   & 9133  &E&	       	&  9325  &N&  13/8 \\
 8837  &E&	   & 9149  &E&	       	&  9364  &E&  \\
 8884  &E&	   & 9153  &E&		&  9409  &N&  337/97 \\
 8905  &N&   8/5   & 9192  &E&     	&  9413  &E&  \\
 8909  &E&	   & 9204  &E&	 	&  9428  &E&  \\ \hline
 \end{tabular} $$

\vfill \eject

$$ \begin{tabular}{|r|r|c|r|r|c|r|r|c|} \hline
  $\disc K$ & &  $M(K)$  & $\disc K$ & &  $M(K)$  
			& $\disc K$  & &  $M(K)$ \\ \hline
 9460 &E&  		&  9812 &E&  	& 10004  &E& \\
 9517 &E&  		&  9813 &E&  	& 10040  &E& \\
 9565 &E& $\ge$ 4/5  	&  9833 &E&  	& 10069  &E& \\
 9612 &E&  		&  9836 &E&  	& 10077  &E&  \\
 9653 &N& 35/12		&  9869 &E&	& 10164  &N& 27/22 \\
 9676 &E&  		&  9897 &E&  	& 10172  &E&  \\
 9745 &N& 67/23		&  9905 &N& 9/5	& 10200  &E&  \\
 9749 &E&  		&  9937 &E&  	& 10216  &N&  7/4 \\
 9800 &H& 9/5  		&  9980 &E&  	& 10233  &E&  \\
 9805 &E&  		&  9996 &H& 4/3 & 10260  &E&  \\   \hline
10261 &N& 11/7  	& 10540 &E&  		& 10721 &E&  \\
10273 &H& 27/7 		& 10552 &E&  		& 10733 &E&  \\
10292 &E&  9/10 	& 10561 &N& 11/7 	& 10740 &E&  \\
10301 &E&  		& 10580 &E&  		& 10812 &E&  \\
10309 &H& 11/2 		& 10609 &E&  		& 10844 &E&  \\
10324 &E&  		& 10636 &E&  		& 10865 &E&  \\
10333 &N&  1		& 10641 &E&  		& 10868 &E&  \\
10353 &E&  		& 10661 &E&  		& 10889 &H& 13/5 \\
10457 &N& 27/25		& 10664 &E&  		& 10904 &E&  \\
10484 &E&  		& 10712 &E&  		& 10929 &&  \\ \hline
\end{tabular} $$

\medskip

These Tables contain the known Euclidean minima for cubic
number fields of small discriminant. The fields are ordered
by $|\disc K|$; fields with equal discriminant are ordered
as in the number field tables at Bordeaux \\
({\tt file://megrez.ceremab.u-bordeaux.fr/pub/numberfields/{}}). \\
The letter N indicates 
that the field has class number $1$ but is not norm-Euclidean, 
and $H$ that it has class number $> 1$. Moreover, $E$ means
that the field is norm-Euclidean; if no Minimum is given, we
succeeded in covering the fundamental domain with $k = 0.99$.
CPU-times ranged from a few minutes for fields with small 
discriminants to several hours; by far the hardest nut to
crack was $\disc K = 10661$, which took several days.

\medskip

The only fields with $\disc K < 11,000$ whose Euclidean nature is
currently not known are those with discriminants $10929$
and $10941$. We also remark that among the four fields which were
shown to be Euclidean in \cite{GS}, those with discriminants
$11881$, $16129$ and $24649$ are beyond the limits of our tables.

\medskip

\noindent
(*) The Euclidean minimum $M(K)$ for the field with $\disc K = -680$ is
      $$ M(K) = \frac{81956632}{81182612}. $$
(\S)\ The Euclidean minimum $M(K)$ for the field with $\disc K = -728$
	is $$M(K) = \frac{7483645229}{8158377554}.$$

\medskip

\end{document}